\documentclass[12pt]{amsart}
\usepackage{amssymb,amsmath,amsfonts,latexsym}
\usepackage{bm}

\setlength{\textheight}{600pt} \setlength{\textwidth}{475pt}
\oddsidemargin -0mm \evensidemargin -0mm \topmargin -0pt
\newcommand{\newsection}[1]{\setcounter{equation}{0} \section{#1}}
\setcounter{footnote}{1}

\newcommand{\bea}{\begin{eqnarray}}
\newcommand{\eea}{\end{eqnarray}}

\newcommand{\vp}{\varphi}

\newcommand{\clb}{\mathcal{B}}

\newcommand{\cle}{\mathcal{E}}
\newcommand{\clf}{\mathcal{F}}

\newcommand{\clh}{\mathcal{H}}
\newcommand{\clk}{\mathcal{K}}
\newcommand{\cll}{\mathcal{L}}
\newcommand{\clm}{\mathcal{M}}
\newcommand{\cln}{\mathcal{N}}

\newcommand{\clq}{\mathcal{Q}}

\newcommand{\cls}{\mathcal{S}}
\newcommand{\clt}{\mathcal{T}}

\newcommand{\clw}{\mathcal{W}}

\newcommand{\tv}{\tilde{V}}
\newcommand{\tw}{\tilde{\mathcal{W}}}
\newcommand{\D}{\mathbb{D}}

\newcommand{\raro}{\rightarrow}

\def\textmatrix#1&#2\\#3&#4\\{\bigl({#1 \atop #3}\ {#2 \atop #4}\bigr)}
\def\dispmatrix#1&#2\\#3&#4\\{\left({#1 \atop #3}\ {#2 \atop #4}\right)}
\newcommand{\be}{\begin{equation}}
\newcommand{\ee}{\end{equation}}
\newcommand{\ben}{\begin{eqnarray*}}
\newcommand{\een}{\end{eqnarray*}}

\newcommand{\NI}{\noindent}

\newcommand{\bi}{\begin{itemize}}
\newcommand{\ei}{\end{itemize}}


\theoremstyle{definition}

\theoremstyle{plain}

\newtheorem{thm}{Theorem}[section]
\newtheorem{cor}[thm]{Corollary}
\newtheorem{lem}[thm]{Lemma}

\theoremstyle{definition}

\numberwithin{equation}{section}

\let\phi=\varphi

\begin{document}

\title[Joint invariant subspaces and $C^*$-algebras]{On certain commuting isometries, joint invariant subspaces and $C^*$-algebras}

\dedicatory{Dedicated to the memory of Professor Ronald G. Douglas, our teacher, mentor and friend}

\author[Das] {B. Krishna Das}
\address{Department of Mathematics, Indian Institute of Technology Bombay, Powai, Mumbai, 400076, India}
\email{dasb@math.iitb.ac.in, bata436@gmail.com}

\author[Debnath]{Ramlal Debnath}
\address{Indian Statistical Institute, Statistics and Mathematics Unit, 8th Mile, Mysore Road, Bangalore, 560059, India}
\email{ramlal\_rs@isibang.ac.in}

\author[Sarkar]{Jaydeb Sarkar}
\address{Indian Statistical Institute, Statistics and Mathematics Unit, 8th Mile, Mysore Road, Bangalore, 560059, India}
\email{jay@isibang.ac.in, jaydeb@gmail.com}

\subjclass[2010]{47A05, 47A13, 47A20, 47A45, 47B35, 47A65, 46E22, 46E40, 47C15, 47L80}


\keywords{Shift, commuting isometries, joint invariant subspaces,
Hardy space over unit polydisc, $C^*$-algebras, finite codimensional subspaces}

\begin{abstract}
In this paper, motivated by the Berger, Coburn and Lebow and Bercovici, Douglas and Foias theory for tuples of commuting isometries, we study analytic representations and joint invariant
subspaces of a class of commuting $n$-isometries and prove that the $C^*$-algebra generated by the $n$-shift restricted to an invariant subspace of finite codimension in $H^2(\mathbb{D}^n)$ is unitarily equivalent to the $C^*$-algebra generated by the $n$-shift on $H^2(\mathbb{D}^n)$.
\end{abstract}

\maketitle

\newsection{Introduction}

Tuples of commuting isometries on Hilbert spaces are cental objects of study in (multivariable) operator theory. This paper is concerned with the study of analytic representations, joint invariant subspaces and $C^*$-algebras of a certain class of tuples of commuting isometries.

To be precise, let $\clh$ be a Hilbert space, and let $(V_1, \ldots, V_n)$ be an $n$-tuple of commuting isometries on $\clh$. In what follows, we always assume that $n \geq 2$. Set
\[
V = \mathop{\Pi}_{i=1}^n V_i.
\]
We say that $(V_1, \ldots, V_n)$ is an \textit{$n$-isometry} if
$V$ is a shift. A closed subspace
$\cls \subseteq \clh$ is said to be \textit{joint invariant} for
$(V_1, \ldots, V_n)$ if $V_i \cls \subseteq \cls$, $i = 1, \ldots,
n$. Recall that an isometry $X$ on $\clh$ is said to be a \textit{shift}
if $X^{*m} \raro 0$ as $m \raro \infty$ in the strong operator
topology or, equivalently, if $X$ on $\clh$ has no unitary summand.
Moreover, if $X$ is a shift, then $X$ on $\clh$ and $M_z$ on
$H^2_{\clw(X)}(\D)$ are unitarily equivalent, where $\clw(X) = \ker
X^*$, $H^2_{\clw(X)}(\D)$ is the $\clw(X)$-valued Hardy space and
$M_z$ is the multiplication operator by the coordinate function $z$
on $H^2_{\clw(X)}(\D)$ (see Section \ref{sec-2}). 

On the other hand, a simpler (but complex enough) example of $n$-isometry can be obtained by taking restrictions of the $n$-shift $(M_{z_1}, \ldots, M_{z_n})$ on $H^2(\D^n)$ to an invariant subspace of $H^2(\D^n)$. Here $H^2(\D^n)$ denotes the Hardy space over the unit polydisc $\D^n$ and $(M_{z_1}, \ldots, M_{z_n})$ denotes the $n$-tuple of multiplication operators by the coordinate functions on $H^2(\D^n)$. A closed subspace $\cls$ of $H^2(\D^n)$ is said to be an \textit{invariant subspace} of $H^2(\D^n)$ if $M_{z_i} \cls \subseteq \cls$ for all $i = 1, \ldots, n$. Given an invariant subspace $\cls$ of $H^2(\D^n)$, we let
\[
R_{z_i} = P_{\cls} M_{z_i}|_{\cls} \in \clb(\cls) \quad \quad (i=1, \ldots, n),
\]
and denote by $\clt(\cls)$ the $C^*$-algebra generated by the commuting isometries $\{R_{z_1}, \ldots, R_{z_n}\}$. We simply say that $\clt(\cls)$ is the \textit{$C^*$-algebra corresponding to the invariant subspace} $\cls$. Clearly $\{R_{z_1}, \ldots, R_{z_n}\}$ is an $n$-isometry.

In this paper we aim to address three basic issues of $n$-isometries:
(i) analytic and canonical models for $n$-isometries, (ii) an abstract classification of joint invariant subspaces for $n$-isometries, and (iii) the nature of $C^*$-algebra $\clt(\cls)$ where $\cls$ is a finite codimensional invariant subspace in $H^2(\mathbb{D}^n)$. To that aim, for (i) and (ii), we consider the initial approach by Berger, Coburn and Lebow \cite{BCL} from a more modern point of view (due to Bercovici, Douglas and Foias \cite{BDF3}) along with the technique of \cite{MSS}. For (iii), we will examine Seto's approach \cite{MS} more closely from ``subspace'' approximation point of view.

We now briefly outline the setting and the main contributions of this paper. In \cite{BDF3}, motivated by Berger, Coburn and Lebow \cite{BCL},
Bercovici, Douglas and Foias proved the following result: An
$n$-isometry is unitarily equivalent to a model $n$-isometry. The
model $n$-isometries are defined as follows \cite{BDF3}: Consider a
Hilbert space $\cle$, unitary operators $\{U_1, \ldots, U_n\}$ on
$\cle$ and orthogonal projections $\{P_1, \ldots, P_n\}$ on $\cle$.
Let $\{\Phi_1, \ldots, \Phi_n\} \subseteq H^\infty_{\clb(\cle)}(\D)$
be bounded $\clb(\cle)$-valued holomorphic functions (polynomials)
on $\D$, where
\[
\Phi_i (z) = U_i (P_i^\perp + z P_i) \quad \quad \quad (z \in \D),
\]
and $i = 1, \ldots, n$. Then the $n$-tuple of multiplication
operators $(M_{\Phi_1}, \ldots, M_{\Phi_n})$ on $H^2_{\cle}(\D)$ is
called a \textit{model $n$-isometry} if the following conditions are
satisfied:

(a) $U_iU_j =U_j U_i$ for all $i, j = 1, \ldots n$;

(b) $U_1 \cdots U_n = I_{\cle}$;

(c) $P_i + U_i^* P_j U_i = P_j + U_j^* P_i U_j \leq I_{\cle}$ for
all $i\neq j$; and

(d) $P_1 + U_1^* P_2 U_1 + U_1^* U_2^* P_3 U_2 U_1 + \cdots + U_1^*
U_2^* \cdots U_{n-1}^* P_n U_{n-1}\cdots U_2 U_1 = I_{\cle}$.

\NI It is easy to see that a model $n$-isometry is also an
$n$-isometry (see page 643 in \cite{BDF3}).

Throughout the paper, given a Hilbert space $\clh$ and a closed
subspace $\cls$ of $\clh$, $P_{\cls}$ will denote the orthogonal
projection of $\clh$ onto $\cls$. We also set $P^\perp_{\cls} =
I_{\clh} - P_{\cls}$, that is,
\[
P_{\cls}^{\perp} = P_{\cls^\perp}.
\]
Recall that two $n$-tuples of commuting operators $(A_1, \ldots, A_n)$ on $\clh$ and $(B_1, \ldots, B_n)$ on $\clk$ are said to be \textit{unitarily
equivalent} if there exists a unitary operator $U : \clh \raro \clk$
such that $U A_i = B_i U$ for all $i = 1, \ldots, n$.

We refer to Bercovici, Douglas and Foias \cite{BDF1, BDF2, BDF3} and also
\cite{RGD1}, \cite{DY}, \cite{GS}, \cite{BKS}, \cite{DSS}, \cite{GG}, \cite{HQY}, \cite{MMSS}, \cite{DP}, \cite{MS} and \cite{RY1, RY2} for more on $n$-isometries, $n \geq 2$, and related topics.

Our first main result, Theorem \ref{th-char1}, states that an
$n$-isometry is unitarily equivalent to an explicit (and canonical)
model $n$-isometry. In other words, given an $n$-isometry $(V_1,
\ldots, V_n)$ on $\clh$, we explicitly solve the above conditions
(a)-(d) for some Hilbert space $\cle$, unitary operators $\{U_1,
\ldots, U_n\}$ on $\cle$ and orthogonal projections $\{P_1, \ldots,
P_n\}$ on $\cle$ so that the corresponding model $n$-isometry
$(M_{\Phi_1}, \ldots, M_{\Phi_n})$ is unitarily equivalent to $(V_1,
\ldots, V_n)$. This also gives a new proof of Bercovici, Douglas and
Foias theorem. On the one hand, our model $n$-isometry is explicit
and canonical. On the other hand, our proof is perhaps more
computational than the one in \cite{BDF3}. Another advantage of our
approach is the proof of a list of useful equalities related to
commuting isometries, which can be useful in other contexts.

Our second main result concerns a characterization of joint
invariant subspaces of model $n$-isometries.  To be precise, let
$\clw$ be a Hilbert space, and let $(M_{\Phi_1}, \ldots,
M_{\Phi_n})$ be a model $n$-isometry on $H^2_{\clw}(\D)$. Let $\cls$
be a closed subspace of $H^2_{\clw}(\D)$. In Theorem \ref{th-BLH N},
we prove that $\cls$ is invariant for $(M_{\Phi_1}, \ldots,
M_{\Phi_n})$ on $H^2_{\clw}(\D)$ if and only if there exist a
Hilbert space $\clw_*$, an inner function $\Theta \in
H^\infty_{\clb(\clw_*, \clw)}(\D)$ and a model $n$-isometry
$(M_{\Psi_1}, \ldots, M_{\Psi_n})$ on $H^2_{\clw_*}(\D)$ such that
\[
\cls = \Theta H^2_{\clw_*}(\D),
\]
and
\[
\Phi_i \Theta = \Theta \Psi_i,
\]
for all $ i = 1, \ldots, n$. Moreover, the above representation is
unique in an appropriate sense (see the remark following Theorem
\ref{th-BLH N}).

The third and final result concerns $C^*$-algebras corresponding to finite codimensional invariant subspaces in $H^2(\D^n)$. To be more specific, recall that if $n = 1$ and $\cls$ and $\cls'$ are invariant subspaces of $H^2(\D)$, then $U \clt(\cls) U^* = \clt(\cls')$ for some unitary $U : \cls \raro \cls'$. Indeed, since $\cls = \theta H^2(\D)$ for some inner function $\theta \in H^\infty(\D)$, it follows, by Beurling theorem, that $U: = M_{\theta} : H^2(\D) \raro \cls$ is a unitary and hence $U^* \clt(\cls) U = \clt(H^2(\D))$. Clearly, the general case follows from this special case. It is then natural to ask: If $n > 1$ and $\cls$ and $\cls'$ are submodules of $H^2(\D^n)$, are $\clt(\cls)$ and $\clt(\cls')$ isomorphic as $C^*$-algebras? We say that $\clt(\cls)$ and $\clt(\cls')$ are \textit{isomorphic} as $C^*$-algebras if $U \clt(\cls) U^* = \clt(\cls')$ holds for some unitary $U : \cls \raro \cls'$.

In the same paper \cite{BCL}, Berger, Coburn and Lebow asked whether $\clt(\cls)$ is isomorphic to $\clt(H^2(\D^2))$ for every  finite codimensional invariant subspaces $\cls$ in $H^2(\D^2)$. This question was recently answered positively by Seto in \cite{MS}. Here we extend Seto's answer from $H^2(\D^2)$ to the general case $H^2(\D^n)$, $n \geq 2$.

The rest of this paper is organized as follows. In Section 2 we study and review
the analytic construction of $n$-isometries. In Section 3 we study
more closely at the $n$-isometries and examine a (canonical) model
$n$-isometry. A characterization of invariant subspaces is
given in Section 4. Finally, in Section 5, we prove that $\clt(\cls)$ is isomorphic to $\clt(H^2(\D^n))$ where $\cls$ is a finite codimensional invariant subspaces in $H^2(\D^n)$.

\newsection{$n$-isometries}\label{sec-2}

In this section, following \cite{MSS}, we derive an explicit
analytic representation of $n$-isometries. For motivation, let us
recall that if $X$ on $\clh$ is an isometry, then $X$ is a shift
operator if and only if $X$ and $M_z$ on $H^2_{\clw(X)}(\D)$ are
unitarily equivalent. Here, $M_z$ denotes the multiplication operator
by the coordinate function $z$ on $H^2_{\clw(X)}(\D)$, that is, $(M_z f)(w) = w f(w)$ for all $f \in H^2_{\clw(X)}(\D)$ and $w \in \D$. Explicitly, if $X$
is a shift on $\clh$, then
\[
\clh = \mathop{\oplus}_{m=0}^\infty X^m \clw(X),
\]
where
\[
\clw(X) = \ker X^* = \clh \ominus X \clh,
\]
is the \textit{wandering subspace} for $X$ (see Halmos \cite{H}).
Hence the natural map $\Pi_X : \clh \raro H^2_{\clw(X)}(\D)$ defined
by
\[
\Pi_X (X^m \eta) = z^m \eta,
\]
for all $m \geq 0$ and $\eta\in \clw(X)$, is a unitary operator and
\[
\Pi_X X = M_z \Pi_X.
\]
We call $\Pi_X$ the \textit{Wold-von Neumann decomposition} of the
shift $X$.

Now let $\clh$ be a Hilbert space, and let $(V_1, \ldots, V_n)$ be
an $n$-isometry on $\clh$. \textsf{Throughout this paper, we shall
use the following notations}:
\[
V = \mathop{\Pi}_{k=1}^n V_k,
\]
and
\[
\tv_i = \mathop{\Pi}_{j \neq i} V_j,
\]
for all $i = 1, \ldots, n$. For simplicity, we also use the notation
\[
\clw = \clw(V),
\]
and
\[
\clw_i=\clw(V_i)\ \text{ and } \tw_i = \clw(\tv_i),
\]
for all $i = 1, \ldots, n$. With this tool, it is easy to see that
$\ker V_i^*, \ker \tilde{V}^*_i \subseteq \ker V^*$, that is,
\[
\clw_i, \tilde{\clw}_i \subseteq \clw,
\]
for all $i = 1, \ldots, n$. We denote by $P_{\clw_i}$ and
$P_{\tw_i}$ the orthogonal projections of $\clw$ onto the subspaces
$\clw_i$ and $\tw_i$, respectively. In particular, $P_{\clw_i},
P_{\tw_i} \in \clb(\clw)$ and
\[
P_{\tw_i} + P_{\tw_i}^\perp = I_{\clw},
\]
for all $i = 1, \ldots, n$.

Let $\Pi_V : \clh \raro H^2_{\clw}(\D)$ be the Wold-von Neumann
decomposition of $V$. Then
\[
\Pi_V V_i \Pi_V^* \in \{M_z\}',
\]
and hence there exists $\Phi_i \in H^{\infty}_{\clb(\clw)}(\D)$
\cite{NF} such that $\Pi_V V_i \Pi_V^* = M_{\Phi_i}$ or,
equivalently,
\[
\Pi_V V_i = M_{\Phi_i} \Pi_V,
\]
for all $i = 1, \ldots, n$. Note that $M_{\Phi_i}$ on
$H^2_{\clw}(\D)$ is defined by
\begin{equation}\label{eq-Mphi}
(M_{\Phi_i} f)(w) = {\Phi_i}(w) f(w),
\end{equation}
for all $f \in H^2_{\clw}(\D)$, $w \in \D$ and $i = 1, \ldots, n$.
We now proceed to compute the bounded analytic functions
$\{\Phi_i\}_{i=1}^n$. Our method follows the construction in
\cite{MSS}. In fact, a close variant of Theorem \ref{th-char1} below
follows from Theorems 3.4 and 3.5 of \cite{MSS}. We will only sketch
the construction, highlighting the essential ingredients for our
present purpose.

Let $i \in \{1, \ldots, n\}$, $w \in \D$ and $\eta \in \clw$. Then
from \eqref{eq-Mphi}, we have that
\[
\begin{split}
\Phi_i(w) \eta = (M_{\Phi_i} \eta)(w) = (\Pi_V V_i \Pi_V^* \eta)(w).
\end{split}
\]
Now it follows from the definition of $\Pi_V$ that $\Pi_V^* \eta =
\eta$, and hence $\Phi_i(w) \eta= (\Pi_V V_i \eta)(w)$. But $I_{\clw} = P_{\tw_i} + \tilde{V}_i \tilde{V}_i^*|_{\clw}$ yields that $V_i \eta = V_i P_{\tw_i} \eta + V \tilde{V}_i^* \eta$ and thus
\[
\begin{split}
\Pi_V V_i \eta & = \Pi_V(V_i P_{\tw_i} \eta + V \tilde{V}_i^* \eta)
\\
& = \Pi_V(V_i P_{\tw_i} \eta) + \Pi_V(V \tilde{V}_i^* \eta)
\\
& = \Pi_V(V_i P_{\tw_i} \eta) + M_z \Pi_V (\tilde{V}_i^* \eta),
\end{split}
\]
as $\Pi_V V = M_z \Pi_V$. Now, since $V^* (V_i (I - \tilde{V}_i \tilde{V}_i^*)V_i^*) = 0$ and $V^* (\tilde{V}_i^* \eta) = 0$, it follows that $V_i P_{\tw_i} \eta\in\clw$ and $\tilde{V}_i^* \eta
\in \clw$. This implies that
\[
\Pi_V V_i \eta = V_i P_{\tw_i} \eta + M_z \tilde{V}_i^* \eta,
\]
and so $\Phi_i(w) \eta = V_i P_{\tw_i} \eta + w \tilde{V}_i^* \eta$. It follows that $\Phi_i(w) = V_i|_{\tw_i} + w \tilde{V}_i^*|_{\tv_i \clw_i}$ as $\clw = \tv_i \clw_i \oplus \tw_i$. Finally, $\clw = \clw_i \oplus V_i \tw_i$ implies that
\[
U_i =
\begin{bmatrix}
\tv_i^*|_{\tv_i \clw_i} & 0\\
0 & V_i|_{\tw_i} \end{bmatrix} :
\begin{array}{c}
\tv_i \clw_i\\
\oplus\\ \tw_i
\end{array}
\raro
\begin{array}{c}
\clw_i \\
\oplus\\ V_i \tw_i
\end{array},
\]
is a unitary operator on $\clw$. Therefore
\[
\Phi_i(w) = U_i (P_{\tw_i} + w P_{\tw_i}^{\perp}),
\]
for all $w \in \D$. By definition of $U_i$, it follows that $U_i = (V_i P_{\tilde{{\clw}_i}} + {\tilde{V_i}}^*)|_{\clw}$. This and
\begin{equation}\label{eq-vt}
V_i P_{\tw_i} = P_{\clw} V_i,
\end{equation}
yields $U_i = (P_{\clw} V_i + {\tilde{V_i}}^*)|_{\clw}$. Summarizing the discussion above, we have the following:

\begin{thm}\label{th-char1}
Let $(V_1, \ldots, V_n)$ be an $n$-isometry on a Hilbert space
$\clh$, and let $V = \displaystyle{\Pi}_{i=1}^n V_i$. Let $\Pi_V : \clh \raro H^2_{\clw}(\D)$ be the Wold-von Neumann
decomposition of $V$. Then
\[
\Pi_V V_i = M_{\Phi_i} \Pi_V,
\]
where
\[
\Phi_i(z) =  U_i (P_{\tw_i} + z P_{\tw_i}^{\perp}),
\]
for all $z \in \D$, and
\[
U_i = (P_{\clw} V_i + {\tilde{V_i}}^*)|_{\clw}.
\]
is a unitary operator on $\clw$ and $i = 1, \ldots, n$.

In particular, $(V_1, \ldots, V_n)$ on $\clh$ and $(M_{\Phi_1}, \ldots,
M_{\Phi_n})$ on $H^2_{\clw}(\D)$ are unitarily equivalent.
\end{thm}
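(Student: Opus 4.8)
The plan is to realize everything through the Wold--von Neumann decomposition $\Pi_V$ of $V = \prod_{i=1}^n V_i$ and then to identify, for each $i$, the operator $M_{\Phi_i}$ that $V_i$ becomes under this unitary. Since $\Pi_V$ is a unitary intertwining $V$ with $M_z$ on $H^2_{\clw}(\D)$, and since each $V_i$ commutes with $V$, the transported operator $\Pi_V V_i \Pi_V^*$ commutes with $M_z$; by the standard representation of the commutant of $M_z$ on a vector-valued Hardy space, it must equal $M_{\Phi_i}$ for some $\Phi_i \in H^\infty_{\clb(\clw)}(\D)$. This is exactly the intertwining relation $\Pi_V V_i = M_{\Phi_i} \Pi_V$ asserted in the statement, so the first step is simply to invoke this commutant description (as already set up in the excerpt preceding the theorem).

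The substance is to compute $\Phi_i$ explicitly, and here I would follow the calculation laid out just above the theorem. The key is to evaluate $\Phi_i(w)$ on the constant vectors $\eta \in \clw$, using $\Pi_V^* \eta = \eta$ and the decomposition $I_{\clw} = P_{\tw_i} + \tilde V_i \tilde V_i^*|_{\clw}$ to split $V_i \eta = V_i P_{\tw_i}\eta + V \tilde V_i^* \eta$. Applying $\Pi_V$ and using $\Pi_V V = M_z \Pi_V$ turns the second term into $M_z \Pi_V(\tilde V_i^* \eta)$. The crucial observation is that both $V_i P_{\tw_i}\eta$ and $\tilde V_i^*\eta$ lie in $\clw = \ker V^*$, which I would verify from $V^*(V_i(I - \tilde V_i \tilde V_i^*)V_i^*) = 0$ and $V^* \tilde V_i^* \eta = 0$; this collapses the Hardy-space expressions to their zeroth Taylor coefficients and yields $\Phi_i(w)\eta = V_i P_{\tw_i}\eta + w\,\tilde V_i^* \eta$. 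Reading off the two orthogonal pieces via $\clw = \tv_i \clw_i \oplus \tw_i$ gives $\Phi_i(w) = V_i|_{\tw_i} + w\,\tilde V_i^*|_{\tv_i \clw_i}$.

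Next I would factor this as $\Phi_i(w) = U_i(P_{\tw_i} + w P_{\tw_i}^\perp)$ and check that the indicated block operator $U_i$ is genuinely unitary on $\clw$. This reduces to verifying that $\tilde V_i^*|_{\tv_i \clw_i} : \tv_i \clw_i \to \clw_i$ and $V_i|_{\tw_i} : \tw_i \to V_i \tw_i$ are each unitary onto the claimed ranges, together with the two orthogonal decompositions $\clw = \tv_i \clw_i \oplus \tw_i$ (domain side) and $\clw = \clw_i \oplus V_i \tw_i$ (range side). Finally I would rewrite $U_i$ in the clean intrinsic form $U_i = (P_{\clw} V_i + \tilde V_i^*)|_{\clw}$, which uses the identity \eqref{eq-vt}, namely $V_i P_{\tw_i} = P_{\clw} V_i$, to replace $V_i P_{\tilde\clw_i}$ by $P_{\clw} V_i$ on $\clw$.

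The main obstacle is the reduction of the Hardy-space computation to the wandering subspace $\clw$: the whole argument hinges on the fact that the two summands $V_i P_{\tw_i}\eta$ and $\tilde V_i^* \eta$ actually land in $\clw = \ker V^*$, so that $\Pi_V$ acts on them as the identity embedding into the zeroth coefficient rather than producing higher-order terms. Establishing these two membership facts cleanly, and then correctly matching the resulting operator with the block decomposition of $U_i$ relative to the two distinct orthogonal splittings of $\clw$ on the domain and range sides, is the delicate part; the unitarity of $U_i$ and the rewriting via \eqref{eq-vt} are then routine. The concluding unitary-equivalence statement is immediate, since $\Pi_V$ simultaneously intertwines every $V_i$ with $M_{\Phi_i}$.
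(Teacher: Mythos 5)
Your proposal is correct and takes essentially the same route as the paper's own derivation: the commutant description of $\Pi_V V_i \Pi_V^*$, the splitting $V_i\eta = V_i P_{\tw_i}\eta + V\tilde{V}_i^*\eta$ for $\eta\in\clw$, the verification that both summands lie in $\ker V^*$, and the block-unitary factorization of $U_i$ rewritten via \eqref{eq-vt}. There are no gaps to flag.
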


In the following section, we will explore the coefficients of
$\Phi_i$, $i = 1, \ldots, n$, in more details.

\newsection{Model $n$-isometries}

In this section, we propose a canonical model for $n$-isometries. We
study the coefficients of the one-variable polynomials in Theorem
\ref{th-char1} more closely and prove that the corresponding
$n$-isometry  $(M_{\Phi_1}, \ldots, M_{\Phi_n})$ on $H^2_{\clw}(\D)$
is a model $n$-isometry (see Section 1 for the definition of model
$n$-isometries).

Let $(V_1, \ldots, V_n)$ be an $n$-isometry on a Hilbert space
$\clh$. Consider the analytic representation $(M_{\Phi_1}, \ldots,
M_{\Phi_n})$ on $H^2_{\clw}(\D)$ of $(V_1, \ldots, V_n)$ as in
Theorem \ref{th-char1}. First we prove that $\{U_j\}_{j=1}^n$ is a
commutative family. Let $p, q \in \{1, \ldots, n\}$ and $p \neq q$.
As $\clw = \ker V^*$, it follows that
\[
\tv_p^* \tv_q^*|_{\clw} = 0.
\]
Then using (\ref{eq-vt}) we obtain
\[
\begin{split}
U_p U_q & = (P_{\clw} V_p + \tv_p^*)(P_{\clw} V_q + \tv_q^*)|_{\clw}
\\
& = (P_{\clw} V_p P_{\clw} V_q + \tv_p^* P_{\clw} V_q + P_{\clw} V_p
\tv_q^*)|_{\clw}
\\
& = (P_{\clw} V_p V_q + \mathop{\Pi}_{i \neq p,q} V_i^* P_{\tw_q} +
V_p P_{\tw_p} \tv_q^*)|_{\clw}
\\
& = (P_{\clw} V_p V_q + (\mathop{\Pi}_{i \neq p,q} V_i^*) (P_{\tw_q}
+ \tv_q P_{\tw_p} \tv_q^*))|_{\clw}
\\
& = (P_{\clw} V_p V_q + (\mathop{\Pi}_{i \neq p,q} V_i^*))|_{\clw},
\end{split}
\]
as $(P_{\tw_q} + \tv_q P_{\tw_p} \tv^*_q)|_{\clw} = I_{\clw}$, and
hence
\[
U_p U_q = U_q U_p,
\]
follows by symmetry. Now if $I \subseteq \{1, \ldots, n\}$, then the
same line of arguments as above yields
\begin{equation}\label{eq-pUi}
\mathop{\Pi}_{i \in I} U_i = (P_{\clw} (\mathop{\Pi}_{i \in I} V_i)
+ (\mathop{\Pi}_{i \in I^c} V_i^*))|_{\clw}.
\end{equation}
In particular, since $P_{\clw} V|_{\clw} = 0$, we have that
\[
\mathop{\Pi}_{i=1}^n U_i = I_{\clw}.
\]

The following lemma will be crucial in what follow.
\begin{lem}\label{le-UW}
 Fix $1 \leq j \leq n$. Let $I \subseteq \{1, \ldots, n\}$, and let
$j \notin I$. Then
\[
(\mathop{\Pi}_{i \in I} U_i^*) P^\perp_{\tw_j} (\mathop{\Pi}_{i \in I}
U_i)
= (\mathop{\Pi}_{i \in I^c \setminus \{j\}} V_i)
(\mathop{\Pi}_{i \in I^c \setminus \{j\}} V_i^*)|_{\clw}- (\mathop{\Pi}_{i \in I^c} V_i)
(\mathop{\Pi}_{i \in I^c} V_i^*)|_{\clw}.
\]
\end{lem}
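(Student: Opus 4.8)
The plan is to reduce everything to the multiplicative structure of the $V_i$'s by introducing the shorthands $B = \prod_{i \in I} V_i$, $C = \prod_{i \in I^c} V_i$ and $C' = \prod_{i \in I^c \setminus \{j\}} V_i$. Since $j \notin I$ (so $j \in I^c$) and the $V_i$ commute, one has $C = V_j C' = C' V_j$, $V = BC = CB$, and, because $\{i : i \neq j\} = I \sqcup (I^c \setminus \{j\})$, also $\tv_j = B C'$. The two facts I will lean on are, first, the identity $I_\clw = P_{\tw_j} + \tv_j \tv_j^*|_\clw$ recorded in Section~\ref{sec-2}, which gives $P_{\tw_j}^\perp = \tv_j \tv_j^*|_\clw$; and second, equation \eqref{eq-pUi}, which yields $\prod_{i \in I} U_i = (P_\clw B + C^*)|_\clw$. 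Since $\prod_{i=1}^n U_i = I_\clw$ and the $U_i$ commute, taking adjoints gives $\prod_{i \in I} U_i^* = \prod_{i \in I^c} U_i = (P_\clw C + B^*)|_\clw$, again by \eqref{eq-pUi}. I abbreviate $U_I = \prod_{i \in I} U_i$.

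The first computational step is to evaluate $P_{\tw_j}^\perp U_I \eta$ for $\eta \in \clw$. Starting from $U_I \eta = P_\clw B \eta + C^* \eta$, I would apply $\tv_j^* = C'^* B^*$ and use the two defining features of $\clw$: that $V^* \eta = 0$ and that $P_\clw = I_\clh - V V^*$ (the Wold--von Neumann decomposition of the shift $V$). The term $\tv_j^* C^* \eta = C'^* V^* \eta$ then vanishes, while $\tv_j^* P_\clw B \eta = C'^* (I - C C^*) \eta$ after using the isometry relations $B^* B = I$, $B^* V = C$ and $V^* B = C^*$. Collapsing $C'^* C = V_j$ turns this into $\tv_j^* U_I \eta = C'^* \eta - V_j C^* \eta$, and applying $\tv_j = B C'$ together with $B C' V_j = V$ gives the clean intermediate formula $P_{\tw_j}^\perp U_I \eta = B C' C'^* \eta - V C^* \eta$.

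The second step applies $U_I^* = (P_\clw C + B^*)|_\clw$ to this vector. Here the two summands coming from $P_\clw C$ both carry a factor $V$ on the left once one uses $C B = V$ and $C V = V C$, so they are annihilated by $P_\clw V = 0$. The remaining $B^*$ part reduces, via $B^* B = I$ and $B^* V = C$, to $C' C'^* \eta - C C^* \eta$, which is exactly the right-hand side of the Lemma once $C'$ and $C$ are spelled back out as $\prod_{i \in I^c \setminus \{j\}} V_i$ and $\prod_{i \in I^c} V_i$.

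The genuinely delicate point is bookkeeping rather than any single hard estimate: every operator here is really a compression to $\clw$, so I must verify at each stage that the intermediate vectors (in particular $P_{\tw_j}^\perp U_I \eta$) again lie in $\clw$ before the next operator may legitimately be applied, and I must ensure the cross terms truly vanish. These cancellations rest entirely on the two relations $V^* \eta = 0$ for $\eta \in \clw$ and $P_\clw V = 0$, so the whole argument hinges on deploying them at precisely the right moments.
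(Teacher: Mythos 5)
Your proposal is correct and takes essentially the same approach as the paper's proof: both rest on the identity $P^\perp_{\tw_j} = \tv_j \tv_j^*|_{\clw}$, the formula \eqref{eq-pUi} for $\prod_{i \in I} U_i$, and the cancellations $V^*|_{\clw} = 0$ and $P_{\clw} V = 0$ together with $P_{\clw} = I - VV^*$ in the surviving term. The only differences are organizational: you derive $(\prod_{i \in I} U_i)^* = \prod_{i \in I^c} U_i$ from $\prod_{i=1}^n U_i = I_{\clw}$ rather than by taking adjoints directly, and you split the paper's single sandwich computation into two vector-level stages.
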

\begin{proof}
Since $P_{\tw_j}=I_{\clw}-P_{\clw}\tv_j\tv_j^*|_{\clw}$, we have $P^\perp_{\tw_j} = P_{\clw}\tv_j \tv_j^*|_{\clw}=\tv_j
\tv_j^*|_{\clw}$. By once again using the fact that $V^*|_{\clw} = P_{\clw} V|_{\clw} = 0$, and by (\ref{eq-pUi}), one sees that
\[
\begin{split}
(\mathop{\Pi}_{i \in I} U_i^*) P^\perp_{\tw_j} (\mathop{\Pi}_{i \in I}
U_i) & = [(\mathop{\Pi}_{i \in
I} V_i^*) + P_{\clw}(\mathop{\Pi}_{i \in I^c} V_i)]  \tv_j \tv_j^* [P_{\clw} (\mathop{\Pi}_{i \in I} V_i) + (\mathop{\Pi}_{i
\in I^c} V_i^*)]|_{\clw}
\\
& = (\mathop{\Pi}_{i \in I^c\setminus\{j\}} V_i) \tv_j^* P_{\clw}
(\mathop{\Pi}_{i \in I} V_i)|_{\clw}
\\
&=  (\mathop{\Pi}_{i \in I^c\setminus\{j\}} V_i)\tv_j^*(I-VV^*)
(\mathop{\Pi}_{i \in I} V_i)|_{\clw}
\\
& =  (\mathop{\Pi}_{i \in I^c\setminus\{j\}} V_i)
(\mathop{\Pi}_{i \in I^c\setminus \{j\}} V_i^*)|_{\clw}- (\mathop{\Pi}_{i \in I^c} V_i)
(\mathop{\Pi}_{i \in I^c} V_i^*)|_{\clw}
\end{split}
\]
This completes the proof of the lemma.
\end{proof}

In particular, if $I=\{p\}$ and $j=q$, where $p,q\in\{1,\dots,n\}$
and $p\neq q$, then
\[
U_p^* P^\perp_{\tw_q} U_p = (\mathop{\Pi}_{i \neq p,q}
V_i)(\mathop{\Pi}_{i \neq p,q} V_i^*)|_{\clw}-
\tilde{V_p}\tilde{V_p}^*|_{\clw},
\]
hence
\[
\begin{split}
(P_{\tw_p}^\perp + U_p^* P_{\tw_q}^\perp U_p) & =P_{\clw}\tilde{V_p}\tilde{V_p}^*|_{\clw} +
(\mathop{\Pi}_{i \neq p,q} V_i)(\mathop{\Pi}_{i \neq p,q} V_i^*)|_{\clw}- P_{\clw}\tilde{V_p}\tilde{V_p}^*|_{\clw}
\\
&=  (\mathop{\Pi}_{i \neq p,q} V_i)
(\mathop{\Pi}_{i \neq p,q} V_i^*)|_{\clw}
\\
&\le I_{\clw}.
\end{split}
\]
Therefore by symmetry, we have
\[
(P_{\tw_p}^\perp + U_p^* P_{\tw_q}^\perp U_p) = (P_{\tw_q}^\perp +
U_q^* P_{\tw_p}^\perp U_q) \leq I_{\clw}.
\]
Finally, we let $I_j = \{1, \ldots, j-1\}$ for all $1 < j \leq n$
and $I_{n+1}=\{1,\dots,n\}$. Then Lemma \ref{le-UW} implies that for
$1<j\le n$,
\[
(\mathop{\Pi}_{i \in I_j} U_i) P^\perp_{\tw_j} (\mathop{\Pi}_{i \in
I_j} U_i^*) = [(\mathop{\Pi}_{i \in I_{j+1}^c} V_i) (\mathop{\Pi}_{i
\in I_{j+1}^c} V_i^*) - (\mathop{\Pi}_{i \in I_{j}^c} V_i)
(\mathop{\Pi}_{i \in I_{j}^c} V_i^*)]|_{\clw}.
\]
This and $P^\perp_{\tw_1} = \tv_{1} \tv^*_{1}|_{\clw}$ imply that
\[
P^\perp_{\tw_1} + U_1^* P^\perp_{\tw_2} U_1 + U_1^* U_2^*
P^\perp_{\tw_3} U_2 U_1 + \cdots + (\mathop{\Pi}_{i=1}^{n-1} U^*_i)
P^\perp_{\tw_n} (\mathop{\Pi}_{i=1}^{n-1} U_i) = I_{\clw}.
\]

We summarize the above as follows.

\begin{thm}\label{th-M2}
If $(V_1, \ldots, V_n)$ be an $n$-isometry on a Hilbert space
$\clh$, and let $U_1,\ldots,U_n$ be unitary operators as in Theorem \ref{th-char1}. Then

\textup{(a)} $U_pU_q =U_qU_p$ for $p, q= 1, \ldots n$,

\textup{(b)} $\prod_{p=1}^{n} U_p =I_{\clw}$,

\textup{(c)} $(P_{\tw_i}^\perp + U_i^* P_{\tw_j}^\perp U_i) =
(P_{\tw_j}^\perp + U_j^* P_{\tw_i}^\perp U_j) \leq I_{\clw}$ $(1\le i< j\le n)$,

\textup{(d)} $P^\perp_{\tw_1} + U_1^* P^\perp_{\tw_2} U_1 + U_1^* U_2^*
P^\perp_{\tw_2} U_2 U_1 + \cdots + (\mathop{\Pi}_{i=1}^{n-1} U^*_i)
P^\perp_{\tw_n} (\mathop{\Pi}_{i=1}^{n-1} U_i) = I_{\clw}$.
\end{thm}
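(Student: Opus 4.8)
The plan is to read off all four assertions from two identities already assembled before the statement: the explicit form $U_i = (P_{\clw} V_i + \tv_i^*)|_{\clw}$ from Theorem~\ref{th-char1}, and the product formula~\eqref{eq-pUi}, reinforced by Lemma~\ref{le-UW}. The single structural fact powering every computation is $\clw = \ker V^*$, which supplies the annihilations $P_{\clw} V|_{\clw} = 0$ and $\tv_p^* \tv_q^*|_{\clw} = 0$ for $p \neq q$; each of (a)--(d) then emerges by inserting these relations into a product of the $U_i$ and watching the unwanted terms die.

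For (a) I would expand $U_p U_q = (P_{\clw} V_p + \tv_p^*)(P_{\clw} V_q + \tv_q^*)|_{\clw}$, rewrite the mixed terms via~\eqref{eq-vt}, and use $(P_{\tw_q} + \tv_q P_{\tw_p} \tv_q^*)|_{\clw} = I_{\clw}$ to collapse the product to $(P_{\clw} V_p V_q + \prod_{i \neq p,q} V_i^*)|_{\clw}$, an expression symmetric in $p$ and $q$; commutativity follows at once. Iterating this computation yields~\eqref{eq-pUi}, and (b) is then the special case $I = \{1, \ldots, n\}$: the formula reads $\prod_{i=1}^n U_i = (P_{\clw} V)|_{\clw} + I_{\clw}$, whose first summand vanishes because $\clw = \ker V^*$.

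Parts (c) and (d) I would deduce directly from Lemma~\ref{le-UW}. Writing $A_j := (\prod_{i \in I_j^c} V_i)(\prod_{i \in I_j^c} V_i^*)|_{\clw}$ for the projection-type blocks that appear on its right-hand side, the lemma says that conjugating $P_{\tw_j}^\perp$ by a partial product of the $U_i$ returns a difference of two such blocks. For (c), taking $I = \{p\}$ and $j = q$ and adding $P_{\tw_p}^\perp = \tv_p \tv_p^*|_{\clw}$ cancels the subtracted block and leaves $(\prod_{i \neq p,q} V_i)(\prod_{i \neq p,q} V_i^*)|_{\clw}$, a self-adjoint operator on $\clw$ that is visibly symmetric under interchange of $p$ and $q$ and is $\leq I_{\clw}$; this gives both the equality and the bound. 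For (d), choosing $I_j = \{1, \ldots, j-1\}$ turns the $j$-th summand into $A_{j+1} - A_j$, so the whole sum telescopes to $A_{n+1} - A_1 = I_{\clw} - 0$, using $A_{n+1} = I_{\clw}$ (empty product) and $A_1 = V V^*|_{\clw} = 0$.

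The genuinely delicate work sits upstream, in securing~\eqref{eq-pUi} and Lemma~\ref{le-UW} with the projections positioned correctly: one must check that slotting $\tv_j \tv_j^*$ between the two partial products and invoking $V^*|_{\clw} = 0$ really annihilates every cross term except the two boundary blocks, since a naive expansion throws off a great many summands. Once that single algebraic identity is in place, (a)--(d) are formal specializations, and I expect the only remaining care to be in the index bookkeeping of the telescoping sum in (d), in particular verifying that the $j = 1$ term $P_{\tw_1}^\perp$ also fits the pattern $A_2 - A_1$.
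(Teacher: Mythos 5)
Your proposal is correct and matches the paper's own argument essentially step for step: the paper likewise obtains (a) by expanding $U_pU_q=(P_{\clw}V_p+\tv_p^*)(P_{\clw}V_q+\tv_q^*)|_{\clw}$ using \eqref{eq-vt} and $(P_{\tw_q}+\tv_q P_{\tw_p}\tv_q^*)|_{\clw}=I_{\clw}$, gets (b) from \eqref{eq-pUi} with $I=\{1,\ldots,n\}$ and $P_{\clw}V|_{\clw}=0$, and derives (c) and (d) from Lemma \ref{le-UW} with $I=\{p\}$, $j=q$ and with the telescoping choice $I_j=\{1,\ldots,j-1\}$, including your boundary checks $A_{n+1}=I_{\clw}$, $A_1=VV^*|_{\clw}=0$ and $P^\perp_{\tw_1}=\tv_1\tv_1^*|_{\clw}=A_2$. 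No gaps; nothing further is needed.
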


As a corollary, we have:

\begin{cor}\label{cor-MT2}
Let $\clh$ be a Hilbert space and $(V_1, \ldots, V_n)$ be an
$n$-isometry on $\clh$. Let $(M_{\Phi_1}, \ldots, M_{\Phi_n})$ be
the $n$-isometry as constructed in Theorem \ref{th-char1}, and let
$(M_{\Psi_1}, \ldots, M_{\Psi_n})$ on $H^2_{\tw}(\D)$, for some
Hilbert space $\tw$, unitary operators $\{\tilde{U}_i\}_{i=1}^n$ and
orthogonal projections $\{P_i\}_{i=1}^n$ on $\tw$, be a model
$n$-isometry. Then:

\textup{(a)} $(M_{\Phi_1}, \ldots, M_{\Phi_n})$ is a model $n$-isometry.

\textup{(b)} $(V_1, \ldots, V_n)$ and $(M_{\Phi_1}, \ldots, M_{\Phi_n})$ are
unitarily equivalent.

\textup{(c)} $(V_1, \ldots, V_n)$ and $(M_{\Psi_1}, \ldots, M_{\Psi_n})$ are
unitarily equivalent if and only if there exists a unitary operator
$W : \clw \raro \tw$ such that $W U_i = \tilde{U}_i W$ and $W P_i =
\tilde{P}_i W$ for all $i = 1,  \ldots, n$.
\end{cor}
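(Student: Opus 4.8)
The plan is to read off parts (a) and (b) almost immediately from the work already done, and then to focus the real effort on the nontrivial direction of (c). For part (a), I would simply observe that Theorem \ref{th-char1} produces the unitaries $U_i = (P_{\clw} V_i + \tilde{V_i}^*)|_{\clw}$ and the projections $P_{\tw_i}^{\perp}$ on $\clw$, and that Theorem \ref{th-M2} verifies for this data precisely the four defining conditions (a)--(d) of a model $n$-isometry listed in Section 1, with the identifications $U_i \leftrightarrow U_i$ and $P_i \leftrightarrow P_{\tw_i}^{\perp}$ (so that $\Phi_i(z) = U_i(P_i^{\perp} + zP_i)$ with $P_i = P_{\tw_i}^{\perp}$). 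Hence $(M_{\Phi_1}, \ldots, M_{\Phi_n})$ is a model $n$-isometry. Part (b) is nothing but the last line of Theorem \ref{th-char1}: the Wold--von Neumann decomposition $\Pi_V$ is a unitary intertwining $V_i$ with $M_{\Phi_i}$ for every $i$.

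For part (c), the ``if'' direction is the routine one. Given a unitary $W : \clw \raro \tw$ with $WU_i = \tilde{U}_i W$ and $WP_i = \tilde{P}_i W$ (and hence $WP_i^{\perp} = \tilde{P}_i^{\perp} W$) for all $i$, I would lift $W$ to the unitary $I_{H^2(\D)} \otimes W : H^2_{\clw}(\D) \raro H^2_{\tw}(\D)$, i.e. the operator sending $\sum_m z^m \eta_m$ to $\sum_m z^m W\eta_m$. A direct computation on the symbol shows
\[
W \Phi_i(z) W^* = W U_i (P_i^{\perp} + z P_i) W^* = \tilde{U}_i (\tilde{P}_i^{\perp} + z \tilde{P}_i) = \Psi_i(z),
\]
so $(I \otimes W) M_{\Phi_i} = M_{\Psi_i} (I \otimes W)$ for all $i$; composing with the equivalences from (b) gives the unitary equivalence of $(V_1, \ldots, V_n)$ with $(M_{\Psi_1}, \ldots, M_{\Psi_n})$.

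The real content, and the step I expect to be the main obstacle, is the ``only if'' direction: recovering the intertwiner $W$ on the coefficient spaces from a given unitary equivalence at the level of the tuples. The plan is as follows. Suppose $\tau : H^2_{\clw}(\D) \raro H^2_{\tw}(\D)$ is a unitary with $\tau M_{\Phi_i} = M_{\Psi_i} \tau$ for all $i$. Taking the product over $i$ and using conditions (b) in the definition (so that $\prod_i \Phi_i(z) = z I_{\clw}$ and $\prod_i \Psi_i(z) = z I_{\tw}$, because $U_1 \cdots U_n = I$ and the scalar factors multiply to $z$), I obtain $\tau M_z = M_z \tau$, i.e.\ $\tau$ commutes with the shift. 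Therefore $\tau$ itself is a multiplication operator $M_{\Xi}$ for some $\Xi \in H^\infty_{\clb(\clw, \tw)}(\D)$, and since $\tau$ is unitary $\Xi$ must be a constant unitary $W : \clw \raro \tw$ (a multiplier whose adjoint is also a multiplier and which is isometric is necessarily a rigid constant unitary). With $\tau = I \otimes W$ in hand, the intertwining relation $W\Phi_i(z) = \Psi_i(z)W$ holds for all $z \in \D$; comparing the constant terms and the coefficients of $z$ in $WU_i(P_i^{\perp} + zP_i) = \tilde{U}_i(\tilde{P}_i^{\perp} + z\tilde{P}_i)W$ yields
\[
W U_i P_i^{\perp} = \tilde{U}_i \tilde{P}_i^{\perp} W \quad\text{and}\quad W U_i P_i = \tilde{U}_i \tilde{P}_i W.
\]
Adding these gives $WU_i = \tilde{U}_i W$; substituting back then forces $WU_i P_i = \tilde{U}_i \tilde{P}_i W = \tilde{U}_i W P_i$, and cancelling the now-invertible $\tilde{U}_i W = WU_i$ on the left yields $WP_i = \tilde{P}_i W$, as required. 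The delicate point throughout is the identification of the intertwiner as a \emph{constant} multiplier $I \otimes W$; I would justify it by the standard fact that a unitary multiplication operator between vector-valued Hardy spaces commuting with $M_z$ has a constant unitary symbol, which is exactly what the product relation $\tau M_z = M_z \tau$ delivers.
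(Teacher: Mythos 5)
Your proposal is correct, and its relationship to the paper differs by part. For (a) and (b) you do exactly what the paper does: its entire proof of these parts is the remark that they follow from Theorems \ref{th-char1} and \ref{th-M2}, with the identification $P_i = P_{\tw_i}^{\perp}$, just as you say. For (c), however, the paper gives no argument at all --- it cites Theorem 4.1 of \cite{MSS} or Theorem 2.9 of \cite{BDF3} --- so your self-contained proof is a genuine addition, and it is sound: composing with the equivalence from (b) reduces (c) to comparing the two model tuples; multiplying the $n$ intertwining relations and using $\prod_i \Phi_i(z) = zI_{\clw}$, $\prod_i \Psi_i(z) = zI_{\tw}$ gives $\tau M_z = M_z \tau$; the intertwiner is then a constant unitary $I \otimes W$; and comparing the $z^0$ and $z^1$ coefficients of $W\Phi_i(z) = \Psi_i(z)W$, adding them to get $WU_i = \tilde{U}_i W$, substituting back and cancelling the unitary $\tilde{U}_i$ gives $WP_i = \tilde{P}_i W$. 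What your version buys is an actual proof in place of a pointer; what the paper's citation buys is brevity, since the argument you reconstruct is essentially the one behind \cite{MSS} and \cite{BDF3}. Two refinements are worth making. First, your parenthetical justification of $\prod_i \Psi_i(z) = zI_{\tw}$ (``because $U_1 \cdots U_n = I$ and the scalar factors multiply to $z$'') is not by itself a proof: the factors $\tilde{U}_i(\tilde{P}_i^{\perp} + z\tilde{P}_i)$ do not commute in the naive way, and the identity is precisely the joint content of conditions (a)--(d) (see p.~643 of \cite{BDF3}; the paper uses the same fact in \eqref{MphiS}), so quote it as a known consequence of the definition rather than deriving it from condition (b) alone. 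Second, for the constancy of $\tau$, the fact you invoke (a unitary multiplier whose adjoint is again a multiplier has constant unitary symbol) is standard and correct, but a shorter route is the wandering-subspace argument: a unitary intertwining the shifts maps $\clw = \ker M_z^*$ onto $\tw = \ker M_z^*$, and then $\tau(z^m \eta) = z^m (\tau \eta)$ for $\eta \in \clw$ shows $\tau = I \otimes W$ directly. Neither point is a gap; both are presentational.
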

\begin{proof}
Parts (a) and (b) follows directly from the previous theorem. The
third part is easy and readily follows from Theorem 4.1 in
\cite{MSS} or Theorem 2.9 in \cite{BDF3}.
\end{proof}

Combining Corollary \ref{cor-MT2} with Theorem \ref{th-M2}, we have the
following characterization of commutative isometric factors of shift
operators.

\begin{cor}\label{co-factor}
Let $\cle$ be a Hilbert space, and let $\{\Phi_i\}_{i=1}^n \subseteq
H^\infty_{\clb(\cle)}(\D)$ be a commutative family of isometric
multipliers. Then
\[
M_z = \mathop{\Pi}_{i = 1}^n M_{\Phi_j},
\]
or, equivalently
\[
\mathop{\Pi}_{i = 1}^n {\Phi_j} = z I_{\cle},
\]
if and only if, up to unitary equivalence, $(M_{\Phi_1}, \ldots,
M_{\Phi_n})$ is a model $n$-isometry.
\end{cor}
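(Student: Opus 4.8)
The plan is to prove the two implications separately, using the canonical model of Theorem~\ref{th-char1} together with the uniqueness statement of Corollary~\ref{cor-MT2} as the only tools. Throughout I write $V = \prod_{i=1}^n M_{\Phi_i}$; since each $M_{\Phi_i}$ is an isometric multiplier, $V = M_\Theta$ with $\Theta = \prod_{i=1}^n \Phi_i \in H^\infty_{\clb(\cle)}(\D)$, so the two conditions on the left are literally the same and the real content is their equivalence with the model property. Both the identity $\prod_i \Phi_i = zI_\cle$ and the property of being a model $n$-isometry are preserved under conjugating the coefficient space $\cle$ by a unitary $W$ (if $\Phi_i' = W\Phi_i W^*$ then $\prod_i\Phi_i' = W(\prod_i\Phi_i)W^*$, and the data $U_i,P_i$ simply conjugate to $WU_iW^*, WP_iW^*$). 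Hence I may read ``up to unitary equivalence'' on the coefficient space, and in the backward direction assume $(M_{\Phi_1}, \ldots, M_{\Phi_n})$ is a genuine model $n$-isometry.

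For the forward direction, suppose $\prod_{i=1}^n \Phi_i = zI_\cle$, i.e. $V = M_z$ on $H^2_\cle(\D)$. Then $V$ is a shift, so $(M_{\Phi_1}, \ldots, M_{\Phi_n})$ is an $n$-isometry and Theorem~\ref{th-char1} applies. The construction is degenerate here: $\clw = \ker V^* = \ker M_z^* = \cle$, and the Wold--von Neumann map $\Pi_V : H^2_\cle(\D) \raro H^2_\cle(\D)$ satisfies $\Pi_V(z^m\eta) = z^m\eta$, so $\Pi_V = I$. Consequently the functions produced by Theorem~\ref{th-char1} coincide with the given $\Phi_i$, and Corollary~\ref{cor-MT2}(a) says this tuple is a model $n$-isometry; this also shows a posteriori that each $\Phi_i$ is affine of model form.

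For the backward direction, assume $(M_{\Phi_1}, \ldots, M_{\Phi_n})$ is a model $n$-isometry. It is an $n$-isometry, so $V = \prod_i M_{\Phi_i}$ is a shift and Theorem~\ref{th-char1} yields its canonical model $(M_{\hat\Phi_1}, \ldots, M_{\hat\Phi_n})$ on $H^2_\clw(\D)$, $\clw = \ker V^*$, together with the intertwiner $\Pi_V$. The crucial observation is that for a canonical model the product is automatically the shift:
\[
\prod_{i=1}^n M_{\hat\Phi_i} = \prod_{i=1}^n \Pi_V V_i \Pi_V^* = \Pi_V V \Pi_V^* = M_z,
\]
using $\Pi_V V = M_z \Pi_V$; hence $\prod_i \hat\Phi_i = zI_\clw$. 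Now $(M_{\Phi_i})$ and its canonical model $(M_{\hat\Phi_i})$ are model $n$-isometries that are unitarily equivalent via $\Pi_V$, so Corollary~\ref{cor-MT2}(c) supplies a coefficient unitary $W : \clw \raro \cle$ intertwining the two families of unitaries and projections, $W\hat U_i = U_i W$ and $W\hat P_i = P_i W$. This gives $\hat\Phi_i = W^* \Phi_i W$ for every $i$, and therefore
\[
zI_\clw = \prod_{i=1}^n \hat\Phi_i = W^*\Big(\prod_{i=1}^n \Phi_i\Big) W,
\]
whence $\prod_i \Phi_i = W(zI_\clw)W^* = zI_\cle$, i.e. $\prod_i M_{\Phi_i} = M_z$.

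I expect the main work to be bookkeeping rather than a genuine difficulty: one must match the defining data of the given model with that of the canonical model of Theorem~\ref{th-char1} when invoking Corollary~\ref{cor-MT2}(c) (keeping track of the identification of $P_i$ with the canonical $P_{\tw_i}^\perp$), and check that the intertwiner $W$ conjugates $\Phi_i$ to $\hat\Phi_i$ as multipliers, which reduces exactly to $W\hat U_i = U_i W$ and $W\hat P_i = P_i W$. The one conceptual point worth stressing is that the identity $\prod_i \hat\Phi_i = zI_\clw$ for canonical models is forced purely by $\prod_i V_i = V$ and $\Pi_V V\Pi_V^* = M_z$; in particular no expansion of the polynomial $\prod_i U_i(P_i^\perp + zP_i)$ or direct manipulation of conditions (a)--(d) is required, which is what keeps the argument short.
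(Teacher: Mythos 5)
Your proposal is correct, and in the backward direction it takes a genuinely different route from the paper. The paper offers no written proof: it simply ``combines'' Corollary~\ref{cor-MT2} with Theorem~\ref{th-M2}, the forward implication being your argument and the backward implication resting on the fact, quoted from \cite{BDF3} in Section~1, that the data (a)--(d) force $\prod_{i=1}^n \Phi_i(z) = z I_{\cle}$, which is verified there by direct expansion of the polynomial product. Your forward direction is exactly the paper's, with the worthwhile extra observation that $\Pi_V$ is the identity under the canonical identification $\clw = \ker M_z^* \cong \cle$, so the $\Phi_i$ are literally of model form, not merely up to equivalence. For the backward direction you instead use only the weaker cited fact that a model $n$-isometry is an $n$-isometry (its product is a shift), build the canonical model via Theorem~\ref{th-char1}, note that $\prod_i M_{\hat{\Phi}_i} = \Pi_V V \Pi_V^* = M_z$ exactly, and transport this identity back through the coefficient unitary supplied by Corollary~\ref{cor-MT2}(c); this is a valid bootstrap that upgrades ``the product is a shift'' to ``the product is $M_z$ on the nose.'' Two caveats temper your closing remark. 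First, your claim that no expansion of $\prod_i U_i(P_i^\perp + zP_i)$ is needed is true only relative to the granted background: the standard verification that a model $n$-isometry has shift product \emph{is} that expansion, so the computation is hidden inside your one citation rather than eliminated. Second, your opening reduction is not merely convenient but necessary: under unrestricted unitary equivalence of operator tuples the backward implication is false --- $(M_z, M_z)$ on $H^2(\D)$ is a $2$-isometry (its product $M_{z^2}$ is a shift), hence unitarily equivalent to a model $2$-isometry by the paper's own theorem, yet $\Phi_1 \Phi_2 = z^2 I_{\mathbb{C}} \neq z I_{\mathbb{C}}$ --- so reading ``up to unitary equivalence'' as conjugation of the coefficient space, as you do and as the paper's ``in other words'' paragraph after the corollary confirms, is the only reading under which the statement holds.
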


In other words, $z I_{\cle}$ factors as $n$ commuting isometric
multipliers $\{\Phi_i\}_{i=1}^n \subseteq H^\infty_{\clb(\cle)}(\D)$
if and only if there exist unitary operators $\{U_i\}_{i=1}^n$ on
$\cle$ and orthogonal projections $\{P_i\}_{i=1}^n$ on $\cle$
satisfying the properties (a) - (d) in Theorem \ref{th-M2} such that
$\Phi_i(z) = U_i(P_i^\perp + z P_i)$ for all $i=1, \ldots, n$.

\newsection{Joint Invariant Subspaces}

Let $\clw$ be a Hilbert space. Let $(M_{\Phi_1}, \ldots,
M_{\Phi_n})$ be a model $n$-isometry on $H^2_{\clw}(\D)$, and let
$\cls$ be a closed invariant subspace for $(M_{\Phi_1}, \ldots,
M_{\Phi_n})$ on $H^2_{\clw}(\D)$, that is
\[
M_{\Phi_i} \cls \subseteq \cls,
\]
for all $i = 1, \ldots, n$. Then $(M_{\Phi_1}|_{\cls}, \ldots,
M_{\Phi_n}|_{\cls})$ is an $n$-tuple of commuting isometries on
$\cls$. Clearly
\[
\mathop{\Pi}_{i=1}^n (M_{\Phi_i}|_{\cls}) = (\mathop{\Pi}_{i=1}^n
M_{\Phi_i})|_{\cls},
\]
and since
\[
\mathop{\Pi}_{j=1}^n M_{\Phi_j} = M_z,
\]
it follows that
\begin{equation}\label{MphiS}
(\mathop{\Pi}_{i=1}^n M_{\Phi_i})|_{\cls} = M_z|_{\cls},
\end{equation}
that is, $\cls$ is a invariant subspace for $M_z$ on
$H^2_{\clw}(\D)$. Moreover, since $M_z|_{\cls}$ is a shift on
$\cls$, the tuple $(M_{\Phi_1}|_{\cls}, \ldots, M_{\Phi_n}|_{\cls})$
is an $n$-isometry on $\cls$. Then by Corollary
\ref{cor-MT2} there is a model $n$-isometry $(M_{\Psi_1}, \ldots,
M_{\Psi_n})$ on $H^2_{\tw}(\D)$, for some Hilbert space $\tw$, such
that $(M_{\Phi_1}|_{\cls}, \ldots, M_{\Phi_n}|_{\cls})$ and $(M_{\Psi_1}, \ldots,
M_{\Psi_n})$ are unitarily equivalent. The main purpose of this
section is to describe the invariant subspaces $\cls$
in terms of the model $n$-isometry $(M_{\Psi_1}, \ldots, M_{\Psi_n})$.

As a motivational example, consider the classical $n = 1$ case. Here
the model $1$-isometry is the shift operator $M_z$ on
$H^2_{\clw}(\D)$ for some Hilbert space $\clw$. Let $\cls$ be a
closed subspace of $H^2_{\clw}(\D)$. Then by the Beurling \cite{B},
Lax \cite{L} and Halmos \cite{H} theorem (or see page 239, Theorem
2.1 in \cite{FF}), $\cls$ is invariant for $M_z$ if and only if
there exist a Hilbert space $\clw_*$ and an inner function $\Theta
\in H^\infty_{\clb(\clw_*, \clw)}(\D)$ such that
\[
\cls = \Theta H^2_{\clw_*}(\D).
\]
Moreover, in this case, if we set
\[
V = M_z|_{\cls},
\]
then $\clw_* = \cls \ominus z \cls$ and $V$ on $\cls$ and $M_z$ on
$H^2_{\clw_*}(\D)$ are unitarily equivalent. This follows directly
from the above representation of $\cls$. Indeed, it follows that $X
= M_{\Theta} : H^2_{\clw_*}(\D) \raro \mbox{ran} M_{\Theta} = \cls$
is a unitary operator and
\[
X M_z = V X.
\]

Turning to the case $n > 1$, let $(M_{\Phi_1}, \ldots, M_{\Phi_n})$
be a model $n$-isometry on $H^2_{\clw}(\D)$, and let $\cls$ be a
closed invariant subspace for $(M_{\Phi_1}, \ldots, M_{\Phi_n})$ on
$H^2_{\clw}(\D)$. Let
\[
\clw_* = \cls \ominus z \cls.
\]
Since $\cls$ is an invariant subspace for $M_z$ on $H^2_{\clw}(\D)$
(see Equation \eqref{MphiS}), by Beurling, Lax and Halmos theorem,
there exists an inner function $\Theta \in H^\infty_{\clb(\clw_*,
\clw)}(\D)$ such that $\cls$ can be represented as
\[
\cls = \Theta H^2_{\clw_*}(\D),
\]
If $1 \leq j \leq n$, then
\[
\Phi_j \cls \subseteq \cls,
\]
implies that $\mbox{ran~} (M_{\Phi_j} M_{\Theta}) \subseteq \mbox{ran~}
M_{\Theta}$, and so by Douglas's range and inclusion theorem \cite{RGD}
\[
M_{\Phi_j} M_{\Theta} = M_{\Theta} M_{\Psi_j},
\]
for some $\Psi_j \in H^{\infty}_{\clb(\clw_*)}(\D)$. Note that
$M_{\Phi_j} M_{\Theta}$ is an isometry and $ \|\Theta \Psi_j f\| =
\|\Psi_j f\|$ for each  $f \in H^2_{\clw_*}(\D)$. But then $\|M_{\Psi_j} f\| = \|f\|$ implies that $M_{\Psi_j}$ is an isometry, that is, $\Psi_j$
is an inner function, and hence
\[
M_{\Psi_j} = M_{\Theta}^* M_{\Phi_j} M_{\Theta},
\]
for all $j = 1, \ldots, n$. So
\[
\mathop{\Pi}_{i=1}^n M_{\Psi_i} = (M_{\Theta}^* M_{\Phi_1}
M_{\Theta}) \cdots (M_{\Theta}^* M_{\Phi_n} M_{\Theta}).
\]
Now $P_{\text{ran~} M_{\Theta}} = M_{\Theta} M_{\Theta}^*$ and
${\Phi_j} \Theta H^2_{\clw_*}(\D) \subseteq\Theta H^2_{\clw_*}(\D)$
implies that
\[
M_{\Theta} M_{\Theta}^* M_{\Phi_j} M_{\Theta} = M_{\Phi_j}
M_{\Theta},
\]
for all $j = 1, \ldots, n$. Consequently
\[
\mathop{\Pi}_{j=1}^n M_{\Psi_j} = M_{\Theta}^*
(\mathop{\Pi}_{j=1}^n M_{\Phi_j}) M_{\Theta}^* = M_{\Theta}^* M_z M_{\Theta} = M_{\Theta}^* M_{\Theta} M_z = M_z,
\]
that is, $(M_{\Psi_1}, \ldots, M_{\Psi_n})$ is an $n$-isometry on
$H^2_{\clw_*}(\D)$. In view of Corollary \ref{co-factor}, this also
implies that the tuple $(M_{\Psi_1}, \ldots, M_{\Psi_n})$ is a model
$n$-isometry. Therefore, we have the following theorem:

\begin{thm}\label{th-BLH N}
Let $\clw$ be a Hilbert space. Let $(M_{\Phi_1}, \ldots,
M_{\Phi_n})$ be a model $n$-isometry on $H^2_{\clw}(\D)$, and let
$\cls$ be a closed subspace of $H^2_{\clw}(\D)$. Then $\cls$ is
invariant for $(M_{\Phi_1}, \ldots, M_{\Phi_n})$ on $H^2_{\clw}(\D)$
if and only if there exist a Hilbert space $\clw_*$, an inner
function $\Theta \in H^\infty_{\clb(\clw_*, \clw)}(\D)$ and a model
$n$-isometry $(M_{\Psi_1}, \ldots, M_{\Psi_n})$ on
$H^2_{\clw_*}(\D)$ such that
\[
\cls = \Theta H^2_{\clw_*}(\D),
\]
and
\[
\Phi_j \Theta = \Theta \Psi_j,
\]
for all $j = 1, \ldots, n$.
\end{thm}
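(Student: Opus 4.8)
The plan is to prove the two implications separately, observing that the forward (``only if'') direction has essentially been assembled in the discussion preceding the statement, so that the proof mainly records those steps and then dispatches the (easy) converse.

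For the forward direction, I would suppose $\cls$ is invariant for $(M_{\Phi_1}, \ldots, M_{\Phi_n})$. First, by \eqref{MphiS} the subspace $\cls$ is invariant for $M_z = \prod_{i=1}^n M_{\Phi_i}$ on $H^2_{\clw}(\D)$, so the Beurling--Lax--Halmos theorem furnishes a Hilbert space $\clw_* = \cls \ominus z \cls$ and an inner function $\Theta \in H^\infty_{\clb(\clw_*, \clw)}(\D)$ with $\cls = \Theta H^2_{\clw_*}(\D)$. Next, from $\Phi_j \cls \subseteq \cls$ I would read off $\mathrm{ran}(M_{\Phi_j} M_{\Theta}) \subseteq \mathrm{ran}\, M_{\Theta}$ and apply Douglas's range inclusion theorem to obtain $\Psi_j \in H^\infty_{\clb(\clw_*)}(\D)$ with $M_{\Phi_j} M_{\Theta} = M_{\Theta} M_{\Psi_j}$, equivalently $\Phi_j \Theta = \Theta \Psi_j$. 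Since $M_{\Phi_j} M_{\Theta}$ is a product of isometries and $M_{\Theta}$ is isometric, $\|M_{\Psi_j} f\| = \|M_{\Theta} M_{\Psi_j} f\| = \|M_{\Phi_j} M_{\Theta} f\| = \|f\|$, so each $M_{\Psi_j}$ is isometric, i.e. $\Psi_j$ is inner. Writing $M_{\Psi_j} = M_{\Theta}^* M_{\Phi_j} M_{\Theta}$ and using $M_{\Theta} M_{\Theta}^* M_{\Phi_j} M_{\Theta} = M_{\Phi_j} M_{\Theta}$ (a consequence of $\Phi_j \cls \subseteq \cls$), the product telescopes to $\prod_{j=1}^n M_{\Psi_j} = M_{\Theta}^* \big(\prod_{j=1}^n M_{\Phi_j}\big) M_{\Theta} = M_{\Theta}^* M_z M_{\Theta} = M_z$. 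Thus $(M_{\Psi_1}, \ldots, M_{\Psi_n})$ is a commuting family of isometric multipliers whose product is $M_z$, and Corollary \ref{co-factor} then guarantees it is a model $n$-isometry, completing this direction.

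For the converse, suppose $\cls = \Theta H^2_{\clw_*}(\D)$ for an inner $\Theta \in H^\infty_{\clb(\clw_*, \clw)}(\D)$ and that $\Phi_j \Theta = \Theta \Psi_j$ for a model $n$-isometry $(M_{\Psi_1}, \ldots, M_{\Psi_n})$ on $H^2_{\clw_*}(\D)$. This direction is immediate: for any $g = \Theta f \in \cls$ with $f \in H^2_{\clw_*}(\D)$,
\[
M_{\Phi_j} g = \Phi_j \Theta f = \Theta \Psi_j f = \Theta (\Psi_j f) \in \Theta H^2_{\clw_*}(\D) = \cls,
\]
since $\Psi_j$ is a bounded multiplier on $H^2_{\clw_*}(\D)$. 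Hence $M_{\Phi_j} \cls \subseteq \cls$ for every $j$, so $\cls$ is invariant.

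The converse and the Beurling--Lax--Halmos step cause no difficulty; the main obstacle lies in the forward direction, specifically in verifying the two structural facts that let Corollary \ref{co-factor} apply, namely that each transferred multiplier $\Psi_j$ is genuinely inner (not merely bounded and analytic) and that the product $\prod_{j=1}^n M_{\Psi_j}$ collapses exactly to $M_z$. Both hinge on carefully exploiting that $\Theta$ is inner (so $M_{\Theta}^* M_{\Theta} = I$ and $M_{\Theta} M_{\Theta}^* = P_{\cls}$) together with the invariance $\Phi_j \cls \subseteq \cls$ to absorb the intermediate projections $M_{\Theta} M_{\Theta}^*$ in the telescoping product.
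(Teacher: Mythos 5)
Your proposal is correct and follows essentially the same route as the paper, whose proof is precisely the discussion preceding the theorem statement: Beurling--Lax--Halmos for $\cls$ as an $M_z$-invariant subspace, Douglas's range inclusion theorem to produce $\Psi_j$, the isometry argument showing each $\Psi_j$ is inner, the absorption identity $M_{\Theta}M_{\Theta}^*M_{\Phi_j}M_{\Theta} = M_{\Phi_j}M_{\Theta}$ to telescope $\prod_j M_{\Psi_j} = M_z$, and Corollary \ref{co-factor} to conclude the tuple is a model $n$-isometry. Your only addition is to write out the (immediate) converse direction, which the paper leaves implicit.
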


The representation of $\cls$ is unique in the following sense: if
there exist a Hilbert space $\hat{\clw}$, an inner multiplier
$\hat{\Theta} \in H^\infty_{\clb(\hat{\clw}, \clw)}(\D)$ and a model
$n$-isometry $( M_{\hat{\Psi}_1}, \ldots, M_{ \hat{\Psi}_n})$ on
$H^2_{\hat{\clw}}(\D)$ such that $\cls = \hat{\Theta} H^2_{\hat{\clw}}(\D)$ and $\Phi_i \hat{\Theta} = \hat{\Theta} \hat{\Psi}_i$
for all $i = 1, \ldots, n$, then there exists a unitary $\tau :
\clw_* \raro \hat{\clw}$ such that
\[
\Theta = \hat{\Theta} \tau,
\]
and
\[
\hat{\Psi}_j\tau = \tau \Psi_j,
\]
for all $j = 1, \ldots, n$. In other words, the model $n$-isometries
$( M_{\hat{\Psi}_1}, \ldots, M_{ \hat{\Psi}_n})$ on
$H^2_{\hat{\clw}}(\D)$ and  $(M_{\Psi_1}, \ldots, M_{\Psi_n})$ on
$H^2_{\clw_*}(\D)$ are unitary equivalent (under the same unitary
$\tau$). Indeed, the existence of the unitary $\tau$ along with the
first equality follows from the uniqueness of the Beurling, Lax and
Halmos theorem (cf. page 239, Theorem 2.1 in \cite{FF}). For the
second equality, observe that (see the uniqueness part in
\cite{MMSS})
\[
\hat{\Theta} \tau \Psi_i = \Theta \Psi_i = \Phi_i \Theta = \Phi_i \hat{\Theta} \tau,
\]
that is $\hat{\Theta} \tau \Psi_i = \hat{\Theta} \hat{\Psi}_i\tau$, and so
\[
\tau \Psi_i =\hat{\Psi}_i \tau,
\]
for all $i=1,\ldots,n$.

It is curious to note that the content of Theorem \ref{th-BLH N} is
related to the question \cite{AY} and its answer \cite{JS3} on the
classifications of invariant subspaces of $\Gamma$-isometries. A
similar result also holds for invariant subspaces for the
multiplication operator tuple on the Hardy space over the unit
polydisc in $\mathbb{C}^n$ (see \cite{MMSS}).

Our approach to $n$-isometries has other applications to $n$-tuples,
$n \geq 2$, of commuting contractions (cf. see \cite{DSS}) that we
will explore in a future paper.

\newsection{$C^*$-algebras generated by commuting isometries}

In this section, we extend Seto's result \cite{MS} on isomorphic $C^*$-algebras of invariant subspaces of finite codimension in $H^2(\D^2)$ to that in $H^2(\D^n)$, $n \geq 2$. Given a Hilbert space $\clh$, the set of all compact operators from $\clh$ to itself is denoted by $K(\clh)$. Recall that, for a closed subspace $\cls \subseteq H^2(\D^n)$, we say that $\cls$ is an invariant subspace of $H^2(\D^n)$ if $M_{z_i} \cls \subseteq \cls$ for all $i = 1, \ldots, n$. Also recall that in the case of an invariant subspace $\cls$ of $H^2(\D^n)$, $(R_{z_1}, \ldots, R_{z_n})$ is an $n$-isometry on $\cls$ where
\[
R_{z_i} = M_{z_i}|_{\cls} \in \clb(\cls) \quad \quad (i=1, \ldots, n).
\]

\begin{lem}\label{lemma-K}
If $\cls$ is an invariant subspace of finite codimension in $H^2(\D^n)$,
then $K(\cls) \subseteq \mathcal T(\cls)$.
\end{lem}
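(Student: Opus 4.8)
The plan is to deduce the lemma from the classical fact that \emph{an irreducible $C^*$-subalgebra of $\clb(\cls)$ which contains a single nonzero compact operator already contains all of $K(\cls)$} (this is standard; see e.g. Arveson's \emph{An Invitation to $C^*$-algebras} or Davidson's \emph{$C^*$-algebras by Example}). Thus the proof splits into two essentially independent tasks: showing that $\mathcal{T}(\cls)$ acts irreducibly on $\cls$, and producing one nonzero compact operator inside $\mathcal{T}(\cls)$. Throughout I would write $P = P_{\cls}$ and $Q = P_{\cls}^{\perp}$. Since $\cls$ has finite codimension, $Q$ is a finite rank projection, and since $\cls$ is invariant (so $\cls^{\perp}$ is $M_{z_i}^*$-invariant) the commutator $[M_{z_i}, P] = -PM_{z_i}Q$ is finite rank for each $i$. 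This single structural observation is used repeatedly: it implies that any word in $R_{z_1}, \ldots, R_{z_n}, R_{z_1}^*, \ldots, R_{z_n}^*$ agrees, modulo a finite rank operator, with the compression to $\cls$ of the corresponding word in the $M_{z_i}$ and $M_{z_i}^*$, because each insertion or deletion of an interior factor $P = I - Q$ costs only a finite rank term.

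For irreducibility, suppose $\clr \subseteq \cls$ is a closed subspace reducing every $R_{z_i}$. Then both $\clr$ and its relative complement $\cls \ominus \clr$ are invariant for each $R_{z_i} = M_{z_i}|_{\cls}$, hence are genuine joint invariant subspaces of $H^2(\D^n)$ which are mutually orthogonal. I claim two orthogonal nonzero invariant subspaces of $H^2(\D^n)$ cannot coexist: choosing nonzero $f \in \clr$ and $g \in \cls \ominus \clr$, invariance gives $\la z^{\alpha}f, z^{\beta}g\ra = 0$ for all $\alpha, \beta \in \mathbb{Z}_{\geq 0}^n$, so every Fourier coefficient of $f\bar g$ on $\mathbb{T}^n$ vanishes and $f\bar g = 0$ a.e.; this contradicts the fact that a nonzero $H^2(\D^n)$ function has nonzero boundary values almost everywhere. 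Hence $\clr \in \{0, \cls\}$ and $\mathcal{T}(\cls)$ is irreducible.

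For the nonzero compact operator, I would examine $W = \prod_{i=1}^n (I_{\cls} - R_{z_i}R_{z_i}^*) \in \mathcal{T}(\cls)$. On $H^2(\D^n)$ the factors $I - M_{z_i}M_{z_i}^*$ are the commuting projections onto $\ker M_{z_i}^*$, and their product is the rank one projection onto the constants. Using $I_{\cls} - R_{z_i}R_{z_i}^* = P(I - M_{z_i}M_{z_i}^*)|_{\cls} + PM_{z_i}QM_{z_i}^*|_{\cls}$ (the second summand finite rank) together with the telescoping remark above, $W$ equals the compression to $\cls$ of the rank one projection onto the constants plus a finite rank operator, hence $W$ is finite rank. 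To see $W \neq 0$, note that $I_{\cls} - R_{z_i}R_{z_i}^*$ is the orthogonal projection of $\cls$ onto $\ker R_{z_i}^* = \cls \ominus z_i\cls$, so any $\eta \in \bigcap_{i=1}^n \ker R_{z_i}^* = \cls \ominus \overline{\sum_i z_i\cls}$ satisfies $W\eta = \eta$. Finally this intersection is nonzero: if $\nu$ is the least order of vanishing at the origin attained on $\cls$, realized by some $f \in \cls$ whose homogeneous component of degree $\nu$ is nonzero, then every element of $\overline{\sum_i z_i\cls}$ vanishes to order at least $\nu + 1$ (the homogeneous-degree projections are continuous), so $f \notin \overline{\sum_i z_i\cls}$ and its component in the intersection is nonzero.

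Combining the two tasks with the classical fact yields $K(\cls) \subseteq \mathcal{T}(\cls)$. The main obstacle is the nonzero compact step: it is exactly here that finite codimension is indispensable, both to force $W$ (a priori a product of infinite rank projections) to be finite rank and to legitimize the compression identities. The non-vanishing of $W$ is the more delicate half, and it is where the minimal-vanishing-order argument --- equivalently, the assertion that $\cls \neq \overline{\mathfrak{m}\cls}$ for the maximal ideal $\mathfrak{m} = (z_1, \ldots, z_n)$ --- does the real work.
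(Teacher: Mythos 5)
Your proposal is correct, and it deviates from the paper's proof at both steps, most substantially in how the nonzero compact operator is produced. The paper follows the same overall reduction (irreducibility plus one nonzero compact yields all of $K(\cls)$), but it simply cites Seto \cite{MS} for irreducibility of $\clt(\cls)$, whereas you prove it outright: your boundary-value argument --- all Fourier coefficients of $f\bar{g}$ vanish while nonzero functions in $H^2(\D^n)$ have almost everywhere nonvanishing boundary values (see \cite{Ru}) --- is sound, and it does not even use finite codimension. For the compact operator the paper argues by cases: when $\cls = H^2(\D^n)$ it uses $\prod_{i=1}^{n}(I-M_{z_i}M_{z_i}^*) = P_{\mathbb{C}}$, and when $\cls$ is proper it observes that the cross-commutators $[R_{z_i}^*,R_{z_j}] = P_{\cls}M_{z_j}P_{\cls^\perp}M_{z_i}^*|_{\cls}$ are finite rank and cannot all vanish, since otherwise $\cls$ would be a proper doubly commuting invariant subspace of finite codimension, hence of the form $\vp H^2(\D^n)$ for an inner $\vp$, which has infinite codimension by the Ahern--Clark corollary \cite{AC}. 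You instead work with $W=\prod_{i=1}^n(I_{\cls}-R_{z_i}R_{z_i}^*)$ uniformly in all cases: it is finite rank because each factor is a finite-rank perturbation of the compression of $I-M_{z_i}M_{z_i}^*$ (your identity $I_{\cls}-R_{z_i}R_{z_i}^* = P(I-M_{z_i}M_{z_i}^*)|_{\cls}+PM_{z_i}QM_{z_i}^*|_{\cls}$ and the telescoping both check out), and it is nonzero because $W$ fixes every vector of $\cls\ominus\overline{\sum_i z_i\cls}$, whose nontriviality your minimal-vanishing-order argument establishes for an arbitrary nonzero invariant subspace. What each approach buys: yours is more self-contained --- it needs neither the Beurling-type theorem for doubly commuting subspaces nor Ahern--Clark, avoids the case split, and isolates the general fact that $\cls\ominus\overline{\sum_i z_i\cls}\neq\{0\}$ --- at the cost of the homogeneous-component bookkeeping; the paper's route is shorter on the page and yields as a byproduct the structural fact that a proper finite-codimensional invariant subspace of $H^2(\D^n)$ can never be doubly commuting.
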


\begin{proof}
Since $\mathcal T (\cls)$ is an irreducible $C^*$-algebra (cf. \cite{MS}), it is enough
to prove that $\mathcal T (\cls)$ contains a non-zero compact operator.
As
\[
\mathop{\Pi}_{i=1}^n (I_{H^2(\D^n)} - M_{z_i}M_{z_i}^*) = P_{\mathbb C}\in \clt(H^2(\D^n)),
\]
we are done when $\cls = H^2(\D^n)$. Let us now suppose that $\cls$ is a proper subspace of $H^2(\D^n)$. For arbitrary $1\le i<j\le n$, we have
\[
[R_{z_i}^*, R_{z_j}] = P_{\cls} M_{z_j} P_{\cls^{\perp}} M_{z_i}^*|_{\cls} \in K(\cls),
\]
as $\cls^{\perp}$ is finite dimensional. It remains for us to prove that $[R_{z_i}^*, R_{z_j}] \neq 0$ for some $1\le i<j\le n$. If not, then $\cls$ is a proper doubly commuting invariant subspace with finite codimension. As a result, we would have $\cls = \vp H^2(\D^n)$ for some inner function $\vp \in H^\infty(\D^n)$ and hence $\cls$ has infinite codimension (see the corollary in page 969, \cite{AC}), a contradiction.
\end{proof}

Given a Hilbert space $\clh$ and nested closed subspaces $\clm_1 \subseteq \clm_2 \subseteq \clh$, the orthogonal projection of $\clm_2$ onto $\clm_1$ will be denoted by $P^{\clm_2}_{\clm_1}$. Note that $P^{\clm_2}_{\clm_1} \in \clb(\clm_2)$. When $\clm_2$ is clear from context we shall simply write $P_{\clm_1}$ for $P^{\clm_2}_{\clm_1}$. If, in addition, $\clh = H^2(\D^n)$ and $\clm_1$ is an invariant subspaces of $H^2(\D^n)$, then we set
\[
R_{z_i}^{\clm_1} = M_{z_i}|_{\clm_1} \in \clb(\clm_1),
\]
and simply write $R_{z_i}$, $i=1, \ldots, n$, when $\clm_1$ is clear from the context. Also, in what follows, a finite rank operator on a Hilbert space will be denoted by $F$ (without referring to the ambient Hilbert space).

\begin{lem}\label{lemma-M1 and M2}
Suppose $\clm_1$ and $\clm_2$ are invariant subspaces of $H^2(\D^n)$, $\clm_1 \subseteq \clm_2$ and $\mbox{dim}(\clm_2 \ominus \clm_1) < \infty$. Then $\clt(\clm_1) = \{P_{\clm_1} T|_{\clm_1}: T \in \clt(\clm_2)\}$. Moreover, if $\cll$ is a closed subspace of $\clm_1$ and $P^{\clm_2}_{\cll} \in \clt(\clm_2)$, then $P^{\clm_1}_{\cll} \in \clt(\clm_1)$.
\end{lem}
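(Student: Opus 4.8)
The plan is to prove the two assertions in sequence, establishing the algebra identity $\clt(\clm_1) = \{P_{\clm_1} T|_{\clm_1} : T \in \clt(\clm_2)\}$ first, and then deducing the projection statement as a consequence. The key structural fact driving everything is that $\clm_2 \ominus \clm_1$ is finite dimensional, so the compression map $T \mapsto P_{\clm_1} T|_{\clm_1}$ differs from an algebra homomorphism only by finite-rank (hence compact) perturbations, and by Lemma \ref{lemma-K} these compact operators already lie inside the target $C^*$-algebra $\clt(\clm_1)$.

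For the first assertion, I would define $\Lambda : \clb(\clm_2) \raro \clb(\clm_1)$ by $\Lambda(T) = P^{\clm_2}_{\clm_1} T|_{\clm_1}$ and check that its restriction to $\clt(\clm_2)$ lands in $\clt(\clm_1)$, and conversely that every element of $\clt(\clm_1)$ arises this way. The core computation is that $\Lambda$ is \emph{almost} multiplicative on generators: for $T, S \in \clb(\clm_2)$ one has $\Lambda(TS) - \Lambda(T)\Lambda(S) = P^{\clm_2}_{\clm_1} T (I - P^{\clm_2}_{\clm_1}) S|_{\clm_1} = P^{\clm_2}_{\clm_1} T P^{\clm_2}_{\clm_2 \ominus \clm_1} S|_{\clm_1}$, which is finite-rank since the middle projection has finite rank. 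Taking $T = R_{z_i}^{\clm_2}$ or its adjoint, this shows $\Lambda(R_{z_i}^{\clm_2}) = R_{z_i}^{\clm_1}$ and that $\Lambda$ maps words in the generators of $\clt(\clm_2)$ to the corresponding words in the generators of $\clt(\clm_1)$ up to an additive finite-rank error. Since Lemma \ref{lemma-K} gives $K(\clm_1) \subseteq \clt(\clm_1)$ (both $\clm_1$ and $\clm_2$ have finite codimension in $H^2(\D^n)$, so the hypothesis applies), these errors are absorbed. Thus $\Lambda(\clt(\clm_2)) \subseteq \clt(\clm_1)$, and the reverse inclusion follows because each generator $R_{z_i}^{\clm_1} = \Lambda(R_{z_i}^{\clm_2})$ is hit, and the set $\{P_{\clm_1} T|_{\clm_1} : T \in \clt(\clm_2)\}$ is a norm-closed self-adjoint subspace containing all generators and, after adding the compacts, closed under products.

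For the second assertion, suppose $\cll \subseteq \clm_1$ is closed with $P^{\clm_2}_{\cll} \in \clt(\clm_2)$. Applying the first part to the operator $T = P^{\clm_2}_{\cll}$, I get $P^{\clm_2}_{\clm_1} P^{\clm_2}_{\cll}|_{\clm_1} \in \clt(\clm_1)$. But since $\cll \subseteq \clm_1$, the compression of the projection onto $\cll$ back to $\clm_1$ is exactly the projection $P^{\clm_1}_{\cll}$: for $x \in \clm_1$ we have $P^{\clm_2}_{\clm_1} P^{\clm_2}_{\cll} x = P^{\clm_2}_{\cll} x = P^{\clm_1}_{\cll} x$ because $P^{\clm_2}_{\cll} x$ already lies in $\cll \subseteq \clm_1$. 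Hence $P^{\clm_1}_{\cll} \in \clt(\clm_1)$, as claimed.

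The main obstacle I anticipate is the verification that the compression map genuinely carries $\clt(\clm_2)$ \emph{onto} $\clt(\clm_1)$ rather than merely into it, since a priori the image of a $C^*$-algebra under a completely positive compression need not be an algebra at all. The resolution is precisely the finite-codimension hypothesis together with Lemma \ref{lemma-K}: the commutator and associativity defects are finite-rank, so modulo the ideal $K(\clm_1) \subseteq \clt(\clm_1)$ the compression \emph{is} a $*$-homomorphism, and one checks that adjoining these compacts recovers exactly $\clt(\clm_1)$. Care must be taken that one works consistently with the relative projections $P^{\clm_2}_{\clm_1}$ (an operator on $\clm_2$) versus the ambient projection on $H^2(\D^n)$, but the notational conventions fixed just before the lemma make this bookkeeping routine.
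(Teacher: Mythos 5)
Your proposal is correct and takes essentially the same route as the paper: the paper likewise observes that the compression $T \mapsto P_{\clm_1} T|_{\clm_1}$ carries words in the generators of $\clt(\clm_2)$ to the corresponding words in $\clt(\clm_1)$ up to finite-rank errors (via the finite-rank projection $P^{\clm_2}_{\clm_2 \ominus \clm_1}$), then concludes that the two sides agree because both are closed subspaces of $\clb(\clm_1)$ containing $K(\clm_1)$ by Lemma \ref{lemma-K}, with the projection assertion an immediate consequence. One minor remark: your parenthetical appeal to Lemma \ref{lemma-K} tacitly assumes $\clm_1$ and $\clm_2$ have finite codimension in $H^2(\D^n)$, which is not literally among the hypotheses of the lemma, but the paper's own proof makes the same tacit assumption and it holds in every application in the paper.
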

\begin{proof}
Note that $P_{\clm_1}R_{z_i}^{\clm_2}|_{\clm_1}= R_{z_i}^{\clm_1}$ and so, by taking adjoint,
we have
\[
P_{\clm_1}(R_{z_i}^{\clm_2})^*|_{\clm_1}= (R_{z_i}^{\clm_1})^*,
\]
for all $i = 1, \ldots, n$. Then $R_{z_i}^{\clm_1}(R_{z_j}^{\clm_1})^*
= P_{\clm_1}R_{z_i}^{\clm_2}P_{\clm_1}^{\clm_2}(R_{z_j}^{\clm_2})^*|_{\clm_1}$, $i = 1, \ldots, n$. This yields
\begin{align*}
R_{z_i}^{\clm_1}(R_{z_j}^{\clm_1})^*
& =P_{\clm_1}R_{z_i}^{\clm_2} I_{\clm_2} (R_{z_j}^{\clm_2})^*|_{\clm_1} -
P_{\clm_1} R_{z_i}^{\clm_2} P_{\clm_2 \ominus \clm_1}^{\clm_2} (R_{z_i}^{\clm_2})^*|_{\clm_1}
\\
&=P_{\clm_1}R_{z_i}^{\clm_2}(R_{z_j}^{\clm_2})^*|_{\clm_1} + F,
\end{align*}
for all $i, j = 1, \ldots, n$, as $\mbox{dim} (\clm_2 \ominus \clm_1) < \infty$. Similarly $(R_{z_j}^{\clm_1})^* R_{z_i}^{\clm_1} =
P_{\clm_1} (R_{z_j}^{\clm_2})^* R_{z_i}^{\clm_2}|_{\clm_1} + F$ for all $i, j = 1, \ldots, n$. Now let $T_1 \in \clt(\clm_1)$ be a finite word formed from the symbols
\[
\{R_{z_i}^{\clm_1}, (R_{z_i}^{\clm_1})^*: i =1, \ldots, n\},
\]
and let $T_2 \in \clt(\clm_2)$ be the same word but formed from the
corresponding symbols in
\[
\{R_{z_i}^{\clm_2}, (R_{z_i}^{\clm_2})^*: i =1, \ldots, n\}.
\]
Then $T_1 = P_{\clm_1} T_2|_{\clm_1} + F$. Since both $\clt(\clm_1)$ and
$\{P_{\clm_1} T|_{\clm_1}: T \in \clt(\clm_2)\}$ are closed subspaces
of $\clb(\clm_1)$ and both contain all the compact operators in $\clb(\clm_1)$, it follows that $\clt(\clm_1) = \{P_{\clm_1} T|_{\clm_1}: T \in \clt(\clm_2)\}$. The second assertion now clearly follows from the first one.
\end{proof}

A thorough understanding of co-doubly commuting invariant subspaces of finite codimension is important to analyze $C^*$-algebras of invariant subspaces of finite codimension in $H^2(\D^n)$. If $\cls$ is a closed invariant subspace of $H^2(\D)$, then we know that $\cls = \theta H^2(\D)$ for some inner function $\theta \in H^\infty(\D)$. To simplify notations, for a given inner function $\theta \in H^\infty(\D)$, we denote
\[
\cls_{\theta} = \theta H^2(\D), \quad \mbox{and} \quad \clq_{\theta} = H^2(\D) \ominus \theta H^2(\D).
\]
Also, given an inner function $\theta_i \in H^\infty(\D)$, $1 \leq i \leq n$, denote by $M_{\theta_i}$ the multiplication operator
\[
(M_{\theta_i} f)(z_1, \ldots, z_n) = \theta_i(z_i) f(z_1, \ldots, z_n)
\]
for all $f \in H^2(\D^n)$ and $(z_1, \ldots, z_n) \in \D^n$. Recall now that an invariant subspace $\cls$ of $H^2(\D^n)$ is said to be \textit{co-doubly commuting} \cite{Sar} if $\cls = \cls_{\Phi}$ where
\begin{equation}\label{eq-Def S Phi}
\cls_{\Phi} =(\clq_{\vp_1}\otimes\cdots\otimes \clq_{\vp_n})^{\perp},
\end{equation}
and $\vp_i$, $i = 1, \ldots, n$, is either inner or the zero function. Here, in view of \eqref{eq-Def S Phi} (or see \cite{Sar}), we have
\[
(M_{\vp_p} M_{\vp_p}^*) (M_{\vp_q} M_{\vp_q}^*) = (M_{\vp_q} M_{\vp_q}^*) (M_{\vp_p} M_{\vp_p}^*),
\]
for all $p,q = 1, \ldots, n$, and
\begin{equation}\label{eq-Projection S}
P_{\cls_{\Phi}} = I_{H^2(\D^n)} - \mathop{\Pi}_{i=1}^n (I_{H^2(\D^n)} - M_{\vp_i} M_{\vp_i}^*).
\end{equation}
It also follows that
\[
\cls_{\Phi} = M_{\vp_1} H^2(\D^n) + \cdots + M_{\vp_n} H^2(\D^n).
\]
Therefore, $\cls_{\Phi}$ has finite codimension if and only if $\vp_i$ is a finite Blashcke product for all $i=1, \ldots, n$. Moreover, if $\cls$ is an invariant subspace of $H^2(\D^n)$, then $\cls$ is of finite codimensional if and only if (cf. Lemma 3.1, \cite{MS}) there exist finite Blaschke products $\vp_1,\dots, \vp_n$ such that
\[
\cls_{\Phi} \subseteq \cls.
\]
Given $\cls_{\Phi}$ as in \eqref{eq-Def S Phi} and $1\le i<j\le n$, we define $\clq_{\Phi}{[i,j]}$ by
\[
\clq_{\Phi}{[i,j]} = \clq_{\vp_i}\otimes\clq_{\vp_{i+1}}\otimes\cdots\otimes\clq_{\vp_j} \subseteq H^2(\D^{j-i+1}).
\]
For notational simplicity, we set
\[
\cll_1 = {\clq_{\Phi}{[1, n-1]}}^\perp \otimes H^2(\D), \quad \cll_2 = {\clq_{\Phi}{[1, n-1]}} \otimes \cls_{\vp_n}, \quad \cll_3 = {\clq_{\Phi}{[1, n-1]}} \otimes H^2(\D),
\]
and
\[
\cll_2' = {\clq_{\Phi}[1, n-1]} \otimes \vp_n \cls_{\vp_n} \quad \mbox{and} \quad \cll_2'' = {\clq_{\Phi}[1, n-1]} \otimes \vp_n \clq_{\vp_n}.
\]
Clearly
\[
\cls_{\Phi} = \cll_1 \oplus \cll_2, \quad H^2(\D^n) = \cll_1 \oplus \cll_3,
\]
and
\[
\cll_2 = \cll_2' \oplus \cll_2''.
\]
\textsf{In what follows, given a closed subspace $\clm \subseteq H^2(\D^n)$, we will simply denote the orthogonal projection $P^{H^2(\D^n)}_{\clm}$ by $P_{\clm}$.}

\begin{lem}\label{projection}
If $\{\vp_i\}_{i=1}^n$ are finite Blashcke products, then $P_{\cll_1}, P_{\cll_2}, P_{\cll_2'}$ and $P_{\cll_2''}$ are in $\mathcal T (H^2(\D^n))$ and $P_{\cll_1}^{\cls_{\Phi}}, P_{\cll_2}^{\cls_{\Phi}}, P_{\cll_2'}^{\cls_{\Phi}}$ and $P_{\cll_2''}^{\cls_{\Phi}}$ are in $\mathcal T (\cls_{\Phi})$.
\end{lem}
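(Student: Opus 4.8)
The plan is to first establish the four memberships in $\clt(H^2(\D^n))$ by writing each projection as a norm-convergent combination of the coordinate shifts and their adjoints, and then to transfer these to $\cls_{\Phi}$ via Lemma \ref{lemma-M1 and M2}.

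The crucial preliminary step is to observe that $M_{\vp_i} \in \clt(H^2(\D^n))$ for every $i$. Since a finite Blaschke product $\vp_i$ has all of its poles outside $\overline{\D}$, it extends holomorphically to a disc of radius $r > 1$, so by the Cauchy estimates its Taylor coefficients $(c_k)_{k \geq 0}$ satisfy $\sum_k |c_k| < \infty$. Hence $M_{\vp_i} = \sum_{k} c_k M_{z_i}^k$ converges in operator norm, and therefore $M_{\vp_i}$, together with the self-adjoint operator $M_{\vp_i} M_{\vp_i}^*$, belongs to $\clt(H^2(\D^n))$. The same reasoning applied to the finite Blaschke product $\vp_n^2$ shows that $M_{\vp_n}^2 (M_{\vp_n}^2)^* \in \clt(H^2(\D^n))$ as well. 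I would stress that this is the one genuinely nontrivial point: for $n \geq 2$ the operator $I - M_{\vp_i}M_{\vp_i}^*$ is infinite rank, so---unlike in Lemma \ref{lemma-K}---compactness is unavailable, and it is the absolutely convergent power series that does the work.

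Next I would read off each projection in the tensor picture $H^2(\D^n) = H^2(\D) \otimes \cdots \otimes H^2(\D)$, using that $I - M_{\vp_i}M_{\vp_i}^*$ is the projection onto $\clq_{\vp_i}$ in the $i$-th slot and that operators acting in distinct variables commute. Writing $P_{\cll_3} = \prod_{i=1}^{n-1}(I - M_{\vp_i}M_{\vp_i}^*)$, one obtains $P_{\cll_1} = I - P_{\cll_3}$ from $H^2(\D^n) = \cll_1 \oplus \cll_3$; then $P_{\cll_2} = P_{\cll_3} M_{\vp_n}M_{\vp_n}^*$ since $\cls_{\vp_n} = \vp_n H^2(\D)$; next $P_{\cll_2'} = P_{\cll_3} M_{\vp_n}^2 (M_{\vp_n}^2)^*$ since $\vp_n \cls_{\vp_n} = \vp_n^2 H^2(\D)$; and finally $P_{\cll_2''} = P_{\cll_2} - P_{\cll_2'}$ from $\cll_2 = \cll_2' \oplus \cll_2''$. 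By the previous step all four operators lie in $\clt(H^2(\D^n))$.

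For the second assertion, I would invoke the ``Moreover'' part of Lemma \ref{lemma-M1 and M2} with $\clm_1 = \cls_{\Phi}$ and $\clm_2 = H^2(\D^n)$. Each of $\cll_1, \cll_2, \cll_2', \cll_2''$ is a closed subspace of $\cls_{\Phi}$ (indeed $\cls_{\Phi} = \cll_1 \oplus \cll_2$ and $\cll_2 = \cll_2' \oplus \cll_2''$), and $\cls_{\Phi}$ has finite codimension in $H^2(\D^n)$ because each $\vp_i$ is a finite Blaschke product. Since the ambient projections $P_{\cll_1}, P_{\cll_2}, P_{\cll_2'}, P_{\cll_2''}$ were just shown to lie in $\clt(H^2(\D^n))$, the lemma immediately yields $P_{\cll_1}^{\cls_{\Phi}}, P_{\cll_2}^{\cls_{\Phi}}, P_{\cll_2'}^{\cls_{\Phi}}, P_{\cll_2''}^{\cls_{\Phi}} \in \clt(\cls_{\Phi})$, completing the argument.
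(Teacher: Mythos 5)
Your proof is correct and follows essentially the same route as the paper: in both, the heart of the matter is the identity $P_{\cll_2} = \big(\prod_{i=1}^{n-1}(I_{H^2(\D^n)} - M_{\vp_i}M_{\vp_i}^*)\big)M_{\vp_n}M_{\vp_n}^*$ together with the reduction of the $\cls_{\Phi}$ statements to the $H^2(\D^n)$ statements via Lemma \ref{lemma-M1 and M2}, and your absolutely convergent Taylor series argument is simply an explicit justification of the paper's one-line remark that $M_{\vp_i} = \vp_i(M_{z_i})$ lies in $\clt(H^2(\D^n))$. The only genuine (though minor) divergence is in the treatment of $\cll_2'$ and $\cll_2''$: the paper observes that $\cll_2''$ is finite dimensional, so $P_{\cll_2''} \in \clt(H^2(\D^n))$ by Lemma \ref{lemma-K} (which rests on compactness and irreducibility), and then obtains $P_{\cll_2'} = P_{\cll_2} - P_{\cll_2''}$; you instead compute $P_{\cll_2'} = P_{\cll_3}M_{\vp_n}^2(M_{\vp_n}^2)^*$ directly, using that $\vp_n^2$ is again a finite Blaschke product, and recover $P_{\cll_2''}$ by subtraction. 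Your variant makes this piece of the argument independent of the compactness lemma at no extra cost, while the paper's version requires no new multiplier beyond the $M_{\vp_i}$ themselves; likewise your $P_{\cll_1} = I_{H^2(\D^n)} - P_{\cll_3}$ replaces the paper's $P_{\cll_1} = P_{\cls_{\Phi}} - P_{\cll_2}$, an equivalent bookkeeping choice. Both arguments are complete and correct.
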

\begin{proof}
By virtue of Lemma \ref{lemma-M1 and M2}, we only prove the lemma for $H^2(\D^n)$. Since $\cll_2''$ is finite-dimensional, it follows, by Lemma \ref{lemma-K}, that $P_{\cll_2''} \in \clt(H^2(\D^n))$. Since $\vp_1 \in H^\infty(\D)$ is a finite Blaschke product, it follows that $M_{\vp_i} = \vp_i(M_{z_i})$ for all $i = 1, \ldots, n$, and then, by \eqref{eq-Projection S}, $P_{\cls_{\Phi}} \in \clt(H^2(\D^n))$. In view of $\cls_{\Phi} = \cll_1 \oplus \cll_2$, it is then enough to prove only that $P_{\cll_2} \in \clt(H^2(\D^n))$. This readily follows from the equality
\[
P_{\cll_2} = \Big(\mathop{\Pi}_{i=1}^{n-1} (I_{H^2(\D^n)} - M_{\vp_i} M_{\vp_i}^*)\Big) M_{\vp_n} M_{\vp_n}^*.
\]
This completes the proof of the lemma.
\end{proof}

In particular, $\clt(\cls_{\Phi})$ contains a wealth of orthogonal projections. This leads to some further observations concerning the $C^*$-algebra $\clt(\cls_{\Phi})$. First, given $\cls_{\Phi}$ as in \eqref{eq-Def S Phi}, we consider the unitary operator $U: H^2(\D^n) \raro \cls_{\Phi}$ defined by
\[
U = \begin{bmatrix} I_{\cll_1} & 0 \\ 0 & M_{\vp_n} \end{bmatrix} : \begin{matrix} \cll_1 \\ \oplus \\ \cll_3\end{matrix} \raro \begin{matrix} \cll_1 \\ \oplus \\ \cll_2 \end{matrix}.
\]
Then $U = P_{\cll_1} + M_{\vp_n} P_{\cll_3}$ and
$U^* = P^{\cls_{\Phi}}_{\cll_1} + M_{\vp_n}^* P^{\cls_{\Phi}}_{\cll_2}$. We have the following result:

\begin{thm}\label{main1}
If $\{\vp_i\}_{i=1}^n$ are finite Blashcke products, then $U^*\clt(\cls_{\Phi}) U = \mathcal T(H^2(\D^n))$.
In particular, $\clt(\cls_{\Phi})$ and $\mathcal T(H^2(\D^n))$ are unitarily equivalent.
\end{thm}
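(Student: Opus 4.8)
The plan is to reduce the whole statement to a single structural observation: the intertwining isometry $U$ itself belongs to $\clt(H^2(\D^n))$. Regard $U = P_{\cll_1} + M_{\vp_n} P_{\cll_3}$ as an isometry in $\clb(H^2(\D^n))$ with range $\cls_{\Phi}$, so that $U^* U = I_{H^2(\D^n)}$ and $U U^* = P_{\cls_{\Phi}}$. By Lemma \ref{projection} we have $P_{\cll_1} \in \clt(H^2(\D^n))$, hence also $P_{\cll_3} = I_{H^2(\D^n)} - P_{\cll_1} \in \clt(H^2(\D^n))$; and since $\vp_n$ is a finite Blaschke product it lies in the disc algebra, so $M_{\vp_n} = \vp_n(M_{z_n})$ is a norm limit of polynomials in $M_{z_n}$ (using $\|M_p - M_{\vp_n}\| = \|p - \vp_n\|_\infty$), and therefore $M_{\vp_n} \in \clt(H^2(\D^n))$. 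Consequently $U \in \clt(H^2(\D^n))$, and so does $U^*$. This is the only place where the detailed geometry of $\cls_{\Phi}$ (through Lemma \ref{projection}) enters; everything else is formal.

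First I would prove the inclusion $U^* \clt(\cls_{\Phi}) U \subseteq \clt(H^2(\D^n))$. The map $X \mapsto U^* X U$ is a $*$-isomorphism of $\clb(\cls_{\Phi})$ onto $\clb(H^2(\D^n))$ carrying $\clt(\cls_{\Phi})$ onto $U^* \clt(\cls_{\Phi}) U$, so it suffices to check that the images $U^* R_{z_i}^{\cls_{\Phi}} U$ of the generators lie in $\clt(H^2(\D^n))$. Identifying operators on $\cls_{\Phi}$ with their zero extensions to $H^2(\D^n)$, we have $R_{z_i}^{\cls_{\Phi}} = P_{\cls_{\Phi}} M_{z_i} P_{\cls_{\Phi}}$, and $U U^* = P_{\cls_{\Phi}}$ gives the clean identity $U^* R_{z_i}^{\cls_{\Phi}} U = U^* M_{z_i} U$. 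As a product of elements of $\clt(H^2(\D^n))$ this lies there, and passing to the generated $C^*$-algebra yields the inclusion.

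Next I would prove the reverse inclusion using Lemma \ref{lemma-M1 and M2}. Since $\cls_{\Phi}$ is an invariant subspace of finite codimension, that lemma gives $\clt(\cls_{\Phi}) = \{ P_{\cls_{\Phi}} T|_{\cls_{\Phi}} : T \in \clt(H^2(\D^n)) \}$. Applying this to $T = U M_{z_i} U^* \in \clt(H^2(\D^n))$, and noting that this operator is already supported on $\cls_{\Phi}$ (its range lies in $\cls_{\Phi}$ and it annihilates $\cls_{\Phi}^{\perp}$, since $U^* = U^* P_{\cls_{\Phi}}$), we obtain $U M_{z_i} U^* \in \clt(\cls_{\Phi})$. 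Hence $M_{z_i} = U^*(U M_{z_i} U^*) U \in U^* \clt(\cls_{\Phi}) U$ for every $i$, and since the $M_{z_i}$ generate $\clt(H^2(\D^n))$ we conclude $\clt(H^2(\D^n)) \subseteq U^* \clt(\cls_{\Phi}) U$. Combining the two inclusions gives $U^* \clt(\cls_{\Phi}) U = \clt(H^2(\D^n))$, and the asserted unitary equivalence is then immediate from $U$ being unitary.

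The hard part is not in the manipulations above but in the inputs they rest on: the fact that $P_{\cll_1} \in \clt(H^2(\D^n))$ (Lemma \ref{projection}, which ultimately depends on $K(\cls) \subseteq \clt(\cls)$ from Lemma \ref{lemma-K}) together with the compression description of $\clt(\cls_{\Phi})$ in Lemma \ref{lemma-M1 and M2}. Once one sees that these two facts place the intertwiner $U$ \emph{inside} $\clt(H^2(\D^n))$, the theorem follows with no explicit block-matrix computation against the decomposition $\cls_{\Phi} = \cll_1 \oplus \cll_2$ and the pieces $\cll_2', \cll_2''$; I would fall back on such coordinate computations only to double-check the identity $U^* R_{z_i}^{\cls_{\Phi}} U = U^* M_{z_i} U$ if needed.
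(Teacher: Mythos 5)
Your proof is correct, and it takes a genuinely different and more economical route than the paper's. Your key observation --- that the intertwiner itself lies in the Toeplitz algebra --- is never isolated in the paper: viewing $U = P_{\cll_1} + M_{\vp_n}P_{\cll_3}$ as an isometry on $H^2(\D^n)$ with $U^*U = I$ and $UU^* = P_{\cls_{\Phi}}$, you get $U \in \clt(H^2(\D^n))$ from $P_{\cll_1} \in \clt(H^2(\D^n))$ (Lemma \ref{projection}), $P_{\cll_3} = I - P_{\cll_1}$, and the fact that $M_{\vp_n} = \vp_n(M_{z_n})$ is a norm limit of polynomials in $M_{z_n}$ --- a fact the paper itself invokes inside the proof of Lemma \ref{projection}, and since Lemmas \ref{lemma-K}, \ref{lemma-M1 and M2} and \ref{projection} are all established independently of the theorem, there is no circularity. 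After that, both inclusions are indeed formal: the identity $U^*R^{\cls_{\Phi}}_{z_i}U = U^*M_{z_i}U$ (valid under the zero-extension identification, since $P_{\cls_{\Phi}}U = U$ and $U^*P_{\cls_{\Phi}} = U^*$) gives $U^*\clt(\cls_{\Phi})U \subseteq \clt(H^2(\D^n))$, and the set equality $\clt(\cls_{\Phi}) = \{P_{\cls_{\Phi}}T|_{\cls_{\Phi}} : T \in \clt(H^2(\D^n))\}$ of Lemma \ref{lemma-M1 and M2} (legitimately applied with $\clm_2 = H^2(\D^n)$, as $\cls_{\Phi}$ has finite codimension), together with your correct remark that $T = UM_{z_i}U^*$ is supported on $\cls_{\Phi}$, gives the reverse. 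The paper instead conjugates generator by generator: it computes $U^*R_{z_i}U$ and $UM_{z_i}U^*$ explicitly against the decompositions $H^2(\D^n) = \cll_1 \oplus \cll_3$, $\cls_{\Phi} = \cll_1 \oplus \cll_2$ and $\cll_2 = \cll_2' \oplus \cll_2''$, tracking finite-rank errors $F$ and absorbing them via $K(\cls_{\Phi}) \subseteq \clt(\cls_{\Phi})$ (Lemma \ref{lemma-K}). What each approach buys: yours is shorter, eliminates all finite-rank bookkeeping, and isolates a reusable principle (once the unitary implementing the equivalence lies in the ambient $C^*$-algebra, the conjugation result is automatic); the paper's yields explicit formulas for the conjugated generators in terms of $R_{z_i}$, $R_{\vp_n}$ and the projections of Lemma \ref{projection}, and its forward inclusion does not rely on the full set equality of Lemma \ref{lemma-M1 and M2}, which your reverse inclusion uses at full strength.
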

\begin{proof}
A simple computation first confirms that
\[
U^* R_{z_n} U= M_{z_n}\in \mathcal T(H^2(\D^n)),
\]
that is, $M_{z_n} \in U^*\clt(\cls_{\Phi})U$ and $R_{z_n} \in U\clt(H^2(\D^n))U^*$.
Next, let $i = 1, \ldots, n-1$. Then $R_{z_i} U = M_{z_i} P_{\cll_1} + R_{z_i} M_{\vp_n} P_{\cll_3}=M_{z_i} P_{\cll_1} + M_{z_i} M_{\vp_n} P_{\cll_3}$ as $M_{\vp_n} \cll_3 = \cll_2 \subseteq \cls_{\Phi}$ and
so
\begin{align*}
U^*R_{z_i} U & = (P^{\cls_{\Phi}}_{\cll_1} + M_{\vp_n}^* P^{\cls_{\Phi}}_{\cll_2})
(M_{z_i} P_{\cll_1} + M_{z_i} M_{\vp_n} P_{\cll_3})
\\
& =  M_{z_i}P_{\cll_1} +
P_{\cll_1} M_{z_i} M_{\vp_n} P_{\cll_3} +
M_{\vp_n}^* P_{\cll_2}M_{z_i} M_{\vp_n} P_{\cll_3},
\end{align*}
as $M_{z_i}\cll_1\subseteq \cll_1$ and $M_{z_i}M_{\vp_n} \cll_3 = M_{z_i}\cll_2 \subseteq \cls_{\Phi}$. Then $U^*R_{z_i} U\in \mathcal T(H^2(\D^n))$ for al $i = 1, \ldots, n$, by Lemma \ref{projection}. In particular
\[
U^*\mathcal T (\cls_{\Phi}) U\subseteq \mathcal T (H^2(\D^n)).
\]
On the other hand, since $\cll_2 = \cll_2' \oplus \cll_2''$ and $\cll_2''$ is finite dimensional, it follows that $P_{\cll_2} = P_{\cll'_2} + F$, and thus $U^* = U^*|_{\cll_1} + U^*|_{\cll_2'} + F$. Now $U M_{z_i} U^*|_{\cll_1} = U M_{z_i}|_{\cll_1} = M_{z_i}|_{\cll_1}$ as $z_i \cll_1 \subseteq \cll_1$ and hence
\[
U M_{z_i} U^*|_{\cll_1} = R_{z_i}|_{\cll_1},
\]
and on the other hand
\[
\begin{split}
U M_{z_i} U^*|_{\cll'_2} = U(M_{z_i} M_{\vp_n}^*|_{\cll'_2})
= U(M_{z_i} P_{\cls_{\Phi}} M_{\vp_n}^*)|_{\cll'_2} = U (R_{z_i} R_{\vp_n}^*)|_{\cll'_2},
\end{split}
\]
where $R_{\vp_n} = M_{\vp_n}|_{\cls_{\Phi}}$. Moreover, since $\cll_3 = \cll_2 \oplus \cls_{\Phi}^\perp$ and $\cls_{\Phi}^\perp$ is finite dimensional, it follows that $P_{\cll_3} = P_{\cll_2} + F$, and thus
\[
\begin{split}
U M_{z_i} U^*|_{\cll'_2} & = P_{\cll_1} R_{z_i} R_{\vp_n}^*|_{\cll'_2} + M_{\vp_n} P_{\cll_3} R_{z_i} R_{\vp_n}^*|_{\cll'_2}
\\
& = P_{\cll_1} R_{z_i} R_{\vp_n}^*|_{\cll'_2} + M_{\vp_n} P_{\cll_2} R_{z_i} R_{\vp_n}^*|_{\cll'_2} + F
\\
& = P^{\cls_{\Phi}}_{\cll_1} R_{z_i} R_{\vp_n}^*|_{\cll'_2} +
R_{\vp_n} P^{\cls_{\Phi}}_{\cll_2} R_{z_i} R_{\vp_n}^*|_{\cll'_2} + F,
\end{split}
\]
and hence 
\[
U M_{z_i}U^* =  R_{z_i}P^{\cls_{\Phi}}_{\cll_1} +
P^{\cls_{\Phi}}_{\cll_1} R_{z_i} R_{\vp_n}^*P^{\cls_{\Phi}}_{\cll'_2}+
R_{\vp_n} P^{\cls_{\Phi}}_{\cll_2} R_{z_i} R_{\vp_n}^*P^{\cls_{\Phi}}_{\cll'_2}
+ F. 
\]
By Lemma \ref{projection}, it follows then that $U M_{z_i}U^* \in \clt(\cls_{\Phi})$ and so $U\mathcal T(H^2(\D^n))U^* \subseteq \mathcal T(\cls_{\Phi})$. Therefore, the conclusion follows from the fact that $U^* R_{z_n} U= M_{z_n}\in \mathcal T(H^2(\D^n))$.
\end{proof}

Now let $\cls$ be an invariant subspace of finite codimension, and let $\cls_{\Phi} \subseteq \cls$, as in \eqref{eq-Def S Phi}, for some finite Blashcke products $\{\vp_i\}_{i=1}^n$. We proceed to prove that $\clt(\cls)$ is unitarily equivalent to $\clt(\cls_{\Phi})$. Let
\[
m :=\dim (\cls\ominus \cls_{\Phi}).
\]
Observe that
\[
P_{\cls_{\Phi}} = M_{\vp_1} M_{\vp_1}^* + (I_{H^2(\D^n)} - M_{\vp_1} M_{\vp_1}^*) \Big(I_{H^2(\D^n)} - \mathop{\Pi}_{i=2}^n (I_{H^2(\D^n)} - M_{\vp_i} M_{\vp_i}^*) \Big),
\]
and so
\[
\cls_{\Phi} = \Big(\cls_{\vp_1}\otimes H^2(\D^{n-1}) \Big) \oplus \Big(\clq_{\vp_1}\otimes {\clq_{\Phi}[2,n]}^{\perp}\Big).
\]

\begin{lem}\label{projection 1}
$P_{\cls_{\vp_1}\otimes H^2(\D^{n-1})}^{\cls}, P_{\clq_{\vp_1}\otimes {\clq_{\Phi}[2,n]}^{\perp}}^{\cls} \in \mathcal T(\cls)$ and $P_{\cls_{\vp_1}\otimes H^2(\D^{n-1})}^{\cls_{\Phi}}, P_{\clq_{\vp_1}\otimes {\clq_{\Phi}[2,n]}^{\perp}}^{\cls_{\Phi}} \in \mathcal T(\cls_{\Phi})$.
\end{lem}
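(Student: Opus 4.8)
The plan is to prove first that the two relevant projections, viewed as projections onto subspaces of the full space $H^2(\D^n)$, already lie in $\clt(H^2(\D^n))$, and then to descend to $\clt(\cls)$ and $\clt(\cls_{\Phi})$ by invoking the second assertion of Lemma \ref{lemma-M1 and M2}. The key structural observation is that both target subspaces are summands in the decomposition $\cls_{\Phi} = (\cls_{\vp_1}\otimes H^2(\D^{n-1})) \oplus (\clq_{\vp_1}\otimes {\clq_{\Phi}[2,n]}^{\perp})$, and hence are closed subspaces of $\cls_{\Phi} \subseteq \cls$; this containment is precisely the hypothesis required to run Lemma \ref{lemma-M1 and M2}.

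First I would identify the two projections (computed inside $H^2(\D^n)$) as elements of $\clt(H^2(\D^n))$. Since $\cls_{\vp_1}\otimes H^2(\D^{n-1}) = \vp_1 H^2(\D^n) = \mbox{ran } M_{\vp_1}$ and $M_{\vp_1}$ is an isometry, the projection onto it is $M_{\vp_1}M_{\vp_1}^*$; as $\vp_1$ is a finite Blaschke product we have $M_{\vp_1} = \vp_1(M_{z_1})$, so $M_{\vp_1}M_{\vp_1}^* \in \clt(H^2(\D^n))$. For the second subspace, the orthogonal decomposition of $\cls_{\Phi}$ yields
\[
P_{\clq_{\vp_1}\otimes {\clq_{\Phi}[2,n]}^{\perp}} = P_{\cls_{\Phi}} - M_{\vp_1}M_{\vp_1}^*,
\]
and since $P_{\cls_{\Phi}} \in \clt(H^2(\D^n))$ by Lemma \ref{projection} (or directly from \eqref{eq-Projection S}), this projection is in $\clt(H^2(\D^n))$ as well.

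With both projections placed in $\clt(H^2(\D^n))$, I would apply Lemma \ref{lemma-M1 and M2} twice. For the assertions over $\cls$, take $\clm_2 = H^2(\D^n)$ and $\clm_1 = \cls$: the finite-codimension hypothesis holds because $\cls$ has finite codimension, and each of the two subspaces lies in $\cls_{\Phi} \subseteq \cls$, so the lemma delivers $P^{\cls}_{\cls_{\vp_1}\otimes H^2(\D^{n-1})}, P^{\cls}_{\clq_{\vp_1}\otimes {\clq_{\Phi}[2,n]}^{\perp}} \in \clt(\cls)$. For the assertions over $\cls_{\Phi}$, take $\clm_2 = H^2(\D^n)$ and $\clm_1 = \cls_{\Phi}$: here $\cls_{\Phi}$ has finite codimension because every $\vp_i$ is a finite Blaschke product, and both subspaces are summands of $\cls_{\Phi}$, yielding the corresponding memberships in $\clt(\cls_{\Phi})$.

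I expect no serious obstacle: the argument runs parallel to the proof of Lemma \ref{projection}, and the only point demanding care is the bookkeeping of the hypotheses of Lemma \ref{lemma-M1 and M2}—confirming that in each application the target subspaces genuinely sit inside the smaller invariant subspace and that the ambient codimension is finite. The sole computation needed is the identity $P_{\clq_{\vp_1}\otimes {\clq_{\Phi}[2,n]}^{\perp}} = P_{\cls_{\Phi}} - M_{\vp_1}M_{\vp_1}^*$, which is immediate from the direct-sum decomposition of $\cls_{\Phi}$ displayed just before the lemma.
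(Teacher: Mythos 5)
Your proof is correct, but it routes through the ambient algebra $\clt(H^2(\D^n))$ rather than through $\clt(\cls)$ as the paper does. The paper argues intrinsically in $\cls$: it sets $R_{\vp_1} = M_{\vp_1}|_{\cls} = \vp_1(M_{z_1})|_{\cls}$, observes $P_{M_{\vp_1}\cls} = R_{\vp_1}R_{\vp_1}^* \in \clt(\cls)$, and then corrects by the finite-dimensional piece $M_{\vp_1}H^2(\D^n) \ominus M_{\vp_1}\cls = M_{\vp_1}(H^2(\D^n)\ominus\cls)$ (whose projection lies in $\clt(\cls)$ by Lemma \ref{lemma-K}) to get $P^{\cls}_{\cls_{\vp_1}\otimes H^2(\D^{n-1})} \in \clt(\cls)$; the second projection then comes by complementation in the decomposition $\cls = (\cls_{\vp_1}\otimes H^2(\D^{n-1})) \oplus (\clq_{\vp_1}\otimes {\clq_{\Phi}[2,n]}^{\perp}) \oplus (\cls\ominus\cls_{\Phi})$ together with $\dim(\cls\ominus\cls_{\Phi}) < \infty$, and only the passage $\cls \to \cls_{\Phi}$ uses Lemma \ref{lemma-M1 and M2}. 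You instead establish the exact ambient identities $P_{\cls_{\vp_1}\otimes H^2(\D^{n-1})} = M_{\vp_1}M_{\vp_1}^*$ and $P_{\clq_{\vp_1}\otimes {\clq_{\Phi}[2,n]}^{\perp}} = P_{\cls_{\Phi}} - M_{\vp_1}M_{\vp_1}^*$ in $\clt(H^2(\D^n))$ and apply the second assertion of Lemma \ref{lemma-M1 and M2} twice, with $(\clm_2,\clm_1) = (H^2(\D^n),\cls)$ and $(H^2(\D^n),\cls_{\Phi})$; all hypotheses check out, since both subspaces sit inside $\cls_{\Phi} \subseteq \cls$ and both $\cls$ and $\cls_{\Phi}$ have finite codimension ($\vp_i$ finite Blaschke). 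Your route is the same one the paper itself uses for Lemma \ref{projection}, and it buys cleanliness: the projection formulas are exact, so no finite-rank error terms appear and Lemma \ref{lemma-K} is never invoked explicitly (compactness is absorbed into Lemma \ref{lemma-M1 and M2}), and both algebras are handled uniformly in one descent step. What the paper's version buys is an explicit realization of the projections, up to finite rank, as words in the generators $R_{z_1}, \ldots, R_{z_n}$ of $\clt(\cls)$ itself, in the same intrinsic style used in the subsequent Lemma \ref{projection 2} and Theorem \ref{main2}; functionally the two arguments are equivalent.
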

\begin{proof}
First one observes that, by virtue of Lemma \ref{lemma-M1 and M2}, it is enough to prove the result for $\cls$. Note that $M_{\vp_1} \cls \subseteq \cls$. Define $R_{\vp_1} \in \clb(\cls)$ by $R_{\vp_1} = M_{\vp_1}|_{\cls}$. Then $R_{\vp_1} = \vp_1(M_{z_1})|_{\cls}$ and
\[
P_{M_{\vp_1} \cls} = R_{\vp_1} R_{\vp_1}^* \in \mathcal T(\cls).
\]
Now on the one hand
\[
\cls_{\vp_1}\otimes H^2(\D^{n-1}) = M_{\vp_1} H^2(\D^n) = M_{\vp_1} \cls \oplus \Big(M_{\vp_1} H^2(\D^n) \ominus M_{\vp_1} \cls \Big),
\]
on the other hand, $M_{\vp_1} H^2(\D^n) \ominus M_{\vp_1} \cls = M_{\vp_1} (H^2(\D^n) \ominus \cls)$ is finite dimensional, and hence we conclude $P_{\cls_{\vp_1}\otimes H^2(\D^{n-1})} \in \mathcal T(\cls)$. This along with $\mbox{dim~}(\cls\ominus \cls_{\Phi}) < \infty$ and the decomposition
\[
\cls=(\cls_{\vp_1}\otimes H^2(\D^{n-1})) \oplus (\clq_{\vp_1}\otimes {\clq_{\Phi}[2,n]}^{\perp}) \oplus (\cls\ominus \cls_{\Phi}),
\]
implies that $P_{\clq_{\vp_1}\otimes {\clq_{\Phi}[2,n]}^{\perp}} \in \mathcal T(\cls)$. This completes the proof of the lemma.
\end{proof}

For simplicity, let us introduce some more notation. Given $q \in \mathbb{N}$, let us denote
\[
H_q = \mathbb{C} \otimes \cdots \otimes \mathbb{C} \subseteq H^2(\D^q).
\]
Note that $H_q$ is the one-dimensional subspace consisting of the constant functions in $H^2(\D^q)$.
Recalling $\dim (\cls\ominus \cls_{\Phi}) = m (< \infty)$, we consider the orthogonal decomposition of $\cls_{\vp_1}\otimes H^2(\D^{n-1})$ as:
\[
\cls_{\vp_1}\otimes H^2(\D^{n-1})= \cls_1 \oplus \cls_2 \oplus \cls_3,
\]
where
\[
\begin{cases}
& \cls_1 = (\vp_1\clq_{z^m})\otimes H_{n-2} \otimes H^2(\D)
\\
& \cls_2 = \cls_{z^m\vp_1}\otimes H_{n-2} \otimes H^2(\D)
\\
& \cls_3 = \cls_{\vp_1}\otimes (H_{n-2} \otimes H^2(\D))^{\perp}.
\end{cases}
\]
Clearly $\cls_3 = \cls_{\vp_1} \otimes (\sum\limits_{i=1}^{n-2} z_i H^2(\D^{n-1}))$. Finally, we define
\[
\cll = \cls_2 \oplus \cls_3 \oplus \Big(\clq_{\vp_1}\otimes {\clq_{\Phi}[2,n]}^{\perp}\Big).
\]
With this notation we have
\[
\cls_{\Phi} = \cls_1 \oplus \cll,
\]
and
\[
\cls =(\cls \ominus \cls_{\Phi}) \oplus \cls_1 \oplus \cll.
\]

\begin{lem}
\label{projection 2}
$P_{\cls_i}^{\cls} \in \mathcal T(\cls)$ and $P_{\cls_i}^{\cls_{\Phi}} \in \mathcal T (\cls_{\Phi})$ for all $i = 1, 2, 3$.
\end{lem}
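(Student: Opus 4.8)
The plan is to reduce everything to the Toeplitz algebra on the full space $H^2(\D^n)$. Since each of $\cls_1, \cls_2, \cls_3$ is contained in $\cls_{\vp_1}\otimes H^2(\D^{n-1}) \subseteq \cls_{\Phi}\subseteq\cls\subseteq H^2(\D^n)$, and since $\cls$ and $\cls_{\Phi}$ are invariant subspaces of finite codimension in $H^2(\D^n)$, the ``moreover'' part of Lemma \ref{lemma-M1 and M2} (applied with $\clm_2 = H^2(\D^n)$ and $\clm_1 = \cls$, respectively $\clm_1 = \cls_{\Phi}$) shows that it is enough to prove the single assertion
\[
P^{H^2(\D^n)}_{\cls_i}\in\clt(H^2(\D^n)) \qquad (i = 1, 2, 3);
\]
both families $P^{\cls}_{\cls_i}\in\clt(\cls)$ and $P^{\cls_{\Phi}}_{\cls_i}\in\clt(\cls_{\Phi})$ then follow at once.

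Next I would write each of these three projections as an explicit word in the generators of $\clt(H^2(\D^n))$, keeping track of the tensor structure: the first variable $z_1$ carries the Blaschke data $\vp_1$ and $z_1^m$, the middle variables $z_2, \ldots, z_{n-1}$ carry the ``constant'' condition coming from $H_{n-2}$, and the last variable $z_n$ is unconstrained. Concretely, $\mathop{\Pi}_{i=2}^{n-1}(I_{H^2(\D^n)} - M_{z_i}M_{z_i}^*)$ is the projection onto the functions independent of $z_2, \ldots, z_{n-1}$, that is, onto $H^2(\D)\otimes H_{n-2}\otimes H^2(\D)$, while $M_{\vp_1}M_{\vp_1}^*$ is the projection onto $\cls_{\vp_1}\otimes H^2(\D^{n-1})$ and $M_{z_1}^m M_{\vp_1}M_{\vp_1}^* M_{z_1}^{*m}$ the projection onto $\cls_{z^m\vp_1}\otimes H^2(\D^{n-1})$. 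Multiplying the commuting projections on disjoint tensor factors then gives
\[
P^{H^2(\D^n)}_{\cls_1\oplus\cls_2} = M_{\vp_1}M_{\vp_1}^*\,\mathop{\Pi}_{i=2}^{n-1}(I_{H^2(\D^n)} - M_{z_i}M_{z_i}^*),
\]
\[
P^{H^2(\D^n)}_{\cls_2} = M_{z_1}^m M_{\vp_1}M_{\vp_1}^* M_{z_1}^{*m}\,\mathop{\Pi}_{i=2}^{n-1}(I_{H^2(\D^n)} - M_{z_i}M_{z_i}^*),
\]
and, using $\cls_1\oplus\cls_2 = \cls_{\vp_1}\otimes H_{n-2}\otimes H^2(\D)$ together with $\cls_{\vp_1}\otimes H^2(\D^{n-1}) = \cls_1\oplus\cls_2\oplus\cls_3$,
\[
P^{H^2(\D^n)}_{\cls_3} = M_{\vp_1}M_{\vp_1}^*\Big(I_{H^2(\D^n)} - \mathop{\Pi}_{i=2}^{n-1}(I_{H^2(\D^n)} - M_{z_i}M_{z_i}^*)\Big), \qquad P^{H^2(\D^n)}_{\cls_1} = P^{H^2(\D^n)}_{\cls_1\oplus\cls_2} - P^{H^2(\D^n)}_{\cls_2}.
\]
Since $M_{\vp_1} = \vp_1(M_{z_1})$ and each $M_{z_i}$ lies in $\clt(H^2(\D^n))$, every expression on the right-hand side belongs to $\clt(H^2(\D^n))$.

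Finally I would justify the two structural facts used above: that the displayed operators really are the claimed orthogonal projections, and that they commute. Both come from the tensor decomposition $H^2(\D^n) = H^2(\D)\otimes\cdots\otimes H^2(\D)$, since projections acting on disjoint sets of tensor factors commute and the product of commuting orthogonal projections is the orthogonal projection onto the intersection of their ranges. The only point requiring care is the bookkeeping of which variable carries which constraint---in particular, verifying that $I_{H^2(\D^n)} - M_{z_i}M_{z_i}^*$ is precisely the projection onto functions independent of $z_i$, and that the ranges intersect exactly as the $\cls_i$ are defined. There is no analytic difficulty beyond this combinatorial matching, and the lemma then follows from the reduction in the first paragraph.
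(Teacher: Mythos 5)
Your proof is correct, but it runs the computation in a different place than the paper does. The paper's proof stays inside $\clt(\cls)$: it invokes Lemma \ref{lemma-M1 and M2} only to pass from $\cls$ down to $\cls_{\Phi}$, then leans on Lemma \ref{projection 1} for $P^{\cls}_{\cls_{\vp_1}\otimes H^2(\D^{n-1})}\in\clt(\cls)$ and expresses $P^{\cls}_{\cls_{\vp_1}\otimes H_{n-2}\otimes H^2(\D)}$ as the sandwich $P_{\cls_{\vp_1}\otimes H^2(\D^{n-1})}(I_{\cls}-X)P_{\cls_{\vp_1}\otimes H^2(\D^{n-1})}$, where $X$ is the inclusion--exclusion sum in the compressed shifts $R_{z_{i_1}}\cdots R_{z_{i_k}}R_{z_{i_1}}^*\cdots R_{z_{i_k}}^*$; the sandwiching is needed precisely because the $R_{z_i}$ do not doubly commute on $\cls$, so $I_{\cls}-X$ alone is not the desired projection. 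You instead push both assertions up to $H^2(\D^n)$ via the ``moreover'' part of Lemma \ref{lemma-M1 and M2} (legitimately: $\cls$ and $\cls_{\Phi}$ both have finite codimension in $H^2(\D^n)$, and each $\cls_i$ sits inside $\cls_{\vp_1}\otimes H^2(\D^{n-1})\subseteq\cls_{\Phi}\subseteq\cls$), where the coordinate shifts are doubly commuting and the three projections admit the closed Toeplitz formulas you display; note that expanding your product $\prod_{i=2}^{n-1}(I-M_{z_i}M_{z_i}^*)$ yields exactly $I-X$ with $M$'s in place of $R$'s, so the two arguments rest on the same combinatorial identity, and your conjugation $M_{z_1}^m(\cdot)M_{z_1}^{*m}$ for $P_{\cls_2}$ mirrors the paper's $R_{z_1}^m(\cdot)R_{z_1}^{*m}$. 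What your route buys is independence from Lemma \ref{projection 1} and freedom from compressed-shift bookkeeping, at the cost of verifying the tensor-leg commutation claims and the membership $M_{\vp_1}=\vp_1(M_{z_1})\in\clt(H^2(\D^n))$ for a finite Blaschke product --- the latter is exactly the fact the paper already uses in Lemma \ref{projection} (the inverses $(1-\bar{a}M_{z_1})^{-1}$ exist in the algebra by Neumann series since $|a|<1$ and $\|M_{z_1}\|=1$), so you are on safe ground; what the paper's route buys is a computation intrinsic to $\cls$ that reuses machinery already established.
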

\begin{proof}
In view of Lemma \ref{lemma-M1 and M2}, it is enough to prove that $P_{\cls_i}^{\cls} \in \mathcal T(\cls)$, $i = 1, 2, 3$. Note that $P_{\cls_{\vp_1} \otimes H_{n-2} \otimes H^2(\D)} \in \clt(\cls)$ as
\[
P_{\cls_{\vp_1} \otimes H_{n-2} \otimes H^2(\D)} = P_{\cls_{\vp_1} \otimes H^2(\D^{n-1})} (I_{\cls} - X) P_{\cls_{\vp_1} \otimes H^2(\D^{n-1})},
\]
where 
\[
X = \sum \limits_{2 \leq i_1 < \cdots < i_k \leq n-1} (-1)^{k+1} R_{z_{i_1}} \cdots R_{z_{i_k}} R_{z_{i_1}}^* \cdots R_{z_{i_k}}^*. 
\]
Therefore
\[
P_{\cls_3} = P_{\cls_{\vp_1}\otimes H^2(\D^{n-1})} - P_{\cls_{\vp_1}\otimes H_{n-2} \otimes H^2(\D)} \in \clt(\cls).
\]
Finally, since $P_{\cls_2}=
R_{z_1}^m P_{\cls_{\vp_1} \otimes H_{n-2} \otimes H^2(\D)}R_{z_1}^{*m}$ and $\cls_1 \oplus \cls_2 = \cls_{\vp_1} \otimes H_{n-2} \otimes H^2(\D)$, it follows that $P_{\cls_1}$ and $P_{\cls_2}$ are in $\clt(\cls)$.
\end{proof}

Before we proceed to the unitary equivalence of the $C^*$-algebras $\mathcal T (\cls)$ and $\mathcal T (\cls_{\Phi})$ we note that
\[
\vp_1\clq_{z^m} = \mbox{span~} \{\vp_1, \vp_1 z, \ldots, \vp_1 z^{m-1}\}.
\]

\begin{thm}\label{main2}
$\mathcal T (\cls)$ and $\mathcal T (\cls_{\Phi})$ are unitarily equivalent.
 \end{thm}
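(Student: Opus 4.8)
The plan is to exhibit an explicit unitary $W : \cls \raro \cls_{\Phi}$ with $W \clt(\cls) W^* = \clt(\cls_{\Phi})$, arranging things so that $W$, viewed as an isometry in $\clb(\cls)$ with range $\cls_{\Phi}$, actually belongs to $\clt(\cls)$. The geometric idea is to leave the common piece $\cll$ untouched and to \emph{absorb} the finite-dimensional defect $\cls \ominus \cls_{\Phi}$ into the bottom level of $\cls_1$ by a single shift in the last variable. This is possible precisely because of the dimension count: writing $\clf = (\vp_1 \clq_{z^m}) \otimes H_{n-2}$, the identity $\vp_1 \clq_{z^m} = \mbox{span}\{\vp_1, \ldots, \vp_1 z^{m-1}\}$ gives $\dim \clf = m = \dim(\cls \ominus \cls_{\Phi})$, while $\cls_1 = \clf \otimes H^2(\D)$ is a vector-valued Hardy space in $z_n$ on which $R_{z_n}$ acts as the multiplicity-$m$ shift with wandering subspace $\clf$.

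Concretely, I would fix a unitary $\sigma : \cls \ominus \cls_{\Phi} \raro \clf$ and define $W$ by $W|_{\cll} = I$, $W|_{\cls \ominus \cls_{\Phi}} = \sigma$ and $W|_{\cls_1} = R_{z_n}|_{\cls_1}$. Since $R_{z_n}$ maps $\cls_1$ isometrically onto $\cls_1 \ominus \clf$, and the three target blocks $\clf$, $\cls_1 \ominus \clf$, $\cll$ are mutually orthogonal with sum $\cls_1 \oplus \cll = \cls_{\Phi}$, the map $W$ is a well-defined unitary from $\cls = (\cls \ominus \cls_{\Phi}) \oplus \cls_1 \oplus \cll$ onto $\cls_{\Phi}$. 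Equivalently,
\[
W = R_{z_n} P^{\cls}_{\cls_1} + \sigma P^{\cls}_{\cls \ominus \cls_{\Phi}} + P^{\cls}_{\cll}.
\]

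The crux of the argument is to observe that every summand of this expression lies in $\clt(\cls)$, whence $W, W^* \in \clt(\cls)$. Indeed $R_{z_n} \in \clt(\cls)$ by definition; $P^{\cls}_{\cls_1} \in \clt(\cls)$ by Lemma \ref{projection 2}; $\sigma P^{\cls}_{\cls \ominus \cls_{\Phi}}$ is finite rank and so lies in $\clt(\cls)$ by Lemma \ref{lemma-K}; and $P^{\cls}_{\cll} = P^{\cls}_{\cls_2} + P^{\cls}_{\cls_3} + P^{\cls}_{\clq_{\vp_1} \otimes {\clq_{\Phi}[2,n]}^{\perp}} \in \clt(\cls)$ by Lemmas \ref{projection 2} and \ref{projection 1}. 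I expect this step---recognizing the correct $W$ and checking that it, rather than merely its generators, sits inside the algebra---to be the main obstacle; once it is in place the rest is formal.

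Finally, I would conclude via Lemma \ref{lemma-M1 and M2}, using $W^* W = I_{\cls}$, $WW^* = P^{\cls}_{\cls_{\Phi}}$, $P^{\cls}_{\cls_{\Phi}} W = W$ and $W^* P^{\cls}_{\cls_{\Phi}} = W^*$. For $A \in \clt(\cls)$ the operator $WAW^*$ lies in $\clt(\cls)$ (a product of three elements of $\clt(\cls)$) and is supported on $\cls_{\Phi}$, so it coincides with the compression $P^{\cls}_{\cls_{\Phi}}(WAW^*)|_{\cls_{\Phi}}$, which lies in $\clt(\cls_{\Phi})$ by Lemma \ref{lemma-M1 and M2}; hence $W \clt(\cls) W^* \subseteq \clt(\cls_{\Phi})$. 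Conversely, given $B \in \clt(\cls_{\Phi})$, Lemma \ref{lemma-M1 and M2} furnishes $T \in \clt(\cls)$ with $B = P^{\cls}_{\cls_{\Phi}} T|_{\cls_{\Phi}}$, and then $W^* B W = W^* T W \in \clt(\cls)$, giving $\clt(\cls_{\Phi}) \subseteq W \clt(\cls) W^*$. Together these yield $W \clt(\cls) W^* = \clt(\cls_{\Phi})$, so the two $C^*$-algebras are unitarily equivalent.
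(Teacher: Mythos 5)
Your argument is correct, and although your unitary is essentially the paper's --- your $W$ is exactly the adjoint of the paper's $U = V \oplus M_{z_n}^* \oplus I_{\cll} : \clf_1 \oplus \clm_1 \oplus \cll \raro (\cls \ominus \cls_{\Phi}) \oplus \cls_1 \oplus \cll$, with $\sigma = V^*$ (note the small notational slip: your wandering subspace should be $\clf_1 = (\vp_1 \clq_{z^m}) \otimes H_{n-2} \otimes H_1$ as a subspace of $H^2(\D^n)$, not $(\vp_1\clq_{z^m}) \otimes H_{n-2}$; the dimension count is unaffected) --- your verification takes a genuinely different route. The paper never observes that the conjugating unitary lies in the algebra; instead it conjugates the generators one at a time, proving $U^* R_{z_i}^{\cls} U \in \clt(\cls_{\Phi})$ and $U R_{z_i}^{\cls_{\Phi}} U^* \in \clt(\cls)$ through a three-way case analysis ($i = n$, $1 < i < n$, $i = 1$), introducing further splittings $\clm_1 = \clk_1 \oplus \tilde{\clk}_1$ and $\cls_1 = \cls_1' \oplus \cls_1''$ for the $z_1$-direction and absorbing finite-rank errors at each stage. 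You instead note that $W = R_{z_n} P^{\cls}_{\cls_1} + \sigma P^{\cls}_{\cls \ominus \cls_{\Phi}} + P^{\cls}_{\cll}$ belongs to $\clt(\cls)$ --- each summand lies there by Lemmas \ref{projection 2}, \ref{lemma-K} and \ref{projection 1} respectively, and $\clt(\cls)$ is unital since $R_{z_i}^* R_{z_i} = I_{\cls}$ --- after which both inclusions follow formally from the compression description $\clt(\cls_{\Phi}) = \{P_{\cls_{\Phi}} T|_{\cls_{\Phi}} : T \in \clt(\cls)\}$ of Lemma \ref{lemma-M1 and M2}, via $W^*W = I_{\cls}$, $WW^* = P^{\cls}_{\cls_{\Phi}}$. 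Your identifications here are careful and correct (e.g., $W^* B W = W^* T W$ because $W^* P^{\cls}_{\cls_{\Phi}} = W^*$). What each approach buys: the paper's computation produces explicit formulas expressing each conjugated generator as a word in the generators of the target algebra modulo finite rank, which is more informative and self-contained; your argument is shorter, avoids the case analysis entirely, and is structurally closer to Seto's original idea of locating the intertwining isometry inside the Toeplitz algebra, at the price of leaning on the irreducibility-based Lemma \ref{lemma-K} to sweep the finite-dimensional corner $\sigma P^{\cls}_{\cls\ominus\cls_{\Phi}}$ into the algebra rather than exhibiting it concretely.
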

\begin{proof}
By noting that $H^2(\D) = H_1 \oplus \cls_z$, we decompose $\cls_1$ as $\cls_1 = \clf_1 \oplus \clm_1$ where
\[
\clf_1 =(\vp_1\clq_{z^m}) \otimes H_{n-1}, \quad \mbox{and} \quad \clm_1 = (\vp_1\clq_{z^m}) \otimes H_{n-2} \otimes \cls_{z}.
\]
Taking into consideration $\mbox{dim} \clf_1 = \mbox{dim~} (\cls\ominus \cls_{\Phi})$, we have a unitary $V: \clf_1 \to \cls\ominus \cls_{\Phi}$, and then, using the decompositions
\[
\cls_{\Phi} = \clf_1 \oplus \clm_1 \oplus \cll.
\]
and
\[
\cls =(\cls \ominus \cls_{\Phi}) \oplus \cls_1 \oplus \cll,
\]
we see that
\[
U = \begin{bmatrix} V & 0 & 0 \\ 0 & M_{z_n}^* & 0 \\ 0 & 0 & I_{\cll} \end{bmatrix} : \clf_1 \oplus \clm_1 \oplus \cll \raro (\cls \ominus \cls_{\Phi}) \oplus \cls_1 \oplus \cll,
\]
defines a unitary from $\cls_{\Phi}$ to $\cls$. We claim that $U^*\mathcal T (\cls)U=\mathcal T (\cls_{\Phi})$. First we prove that $U^*\mathcal T (\cls)U \subseteq \mathcal T (\cls_{\Phi})$. Since $\mbox{dim} \clf_1  < \infty$, it suffices to prove that $U^*R_{z_i}^{\cls}U|_{\clm_1 \oplus \cll} \in \clt(\cls_{\Phi})$ for all $i=1,\cdots,n$. Observe first that $U \clm_1 = M_{z_n}^* \clm_1 = \cls_1 \subseteq \cls_{\Phi}$, $M_{z_n} \cls_1 \subseteq \cls_1$ and $M_{z_n} \cll \subseteq \cll$. Since
\[
U^*R_{z_n}^{\cls}U|_{\clm_1 \oplus \cll} =
U^* M_{z_n} M_{z_n}^*|_{\clm_1} +  M_{z_n}|_{\cll},
\]
and $U^* M_{z_n} M_{z_n}^*|_{\clm_1} =  M_{z_n}^2 M_{z_n}^*|_{\clm_1}
=  M_{z_n}^2 P_{\cls_{\Phi}} M_{z_n}^*|_{\clm_1}$, it follows that
\[
U^*R_{z_n}^{\cls}U|_{\clm_1 \oplus \cll}
= (R_{z_n}^{\cls_{\Phi}})^2 (R_{z_n}^{\cls_{\Phi}})^* P^{\cls_{\Phi}}_{\clm_1}
+ R_{z_n}^{\cls_{\Phi}} P^{\cls_{\Phi}}_{\cll} \in \clt(\cls_{\Phi}).
\]
Now for $1<i<n$, we have
\[
U^*R_{z_i}^{\cls}U|_{\clm_1 \oplus \cll} =
U^* M_{z_i} M_{z_n}^*|_{\clm_1} + U^* M_{z_i}|_{\cll},
\]
where $U^* M_{z_i} M_{z_n}^*|_{\clm_1} = M_{z_i} M_{z_n}^*|_{\clm_1}$ as
$z_i \cls_1 \subseteq \cls_3 \subseteq \cll$. On the other hand, since
$z_i \cls_2 \subseteq \cls_3$ we have $z_i \cll \subseteq \cll$
and hence $U^* M_{z_i}|_{\cll} = M_{z_i}|_{\cll}$, whence
\[
U^*R_{z_i}^{\cls}U|_{\clm_1 \oplus \cll} = R_{z_i}^{\cls_{\Phi}} (R_{z_n}^{\cls_{\Phi}})^* P^{\cls_{\Phi}}_{\clm_1} +
R_{z_i}^{\cls_{\Phi}} P^{\cls_{\Phi}}_{\cll} \in \clt(\cls_{\Phi}).
\]
Now we decompose $\clm_1$ as $\clm_1 = \clk_1 \oplus \tilde{\clk}_1$ where
\[
\clk_1 = (\vp_1\clq_{z^{m-1}})\otimes H_{n-2} \otimes \cls_{z} \quad \mbox{and} \quad \tilde{\clk}_1 = (\vp_1 {z^{m-1}} \mathbb{C})\otimes H_{n-2} \otimes \cls_{z}.
\]
Then
\[
U^*R_{z_1}^{\cls}U|_{\clm_1} = U^*M_{z_1} M_{z_n}^*|_{\clk_1}
+ U^*M_{z_1} M_{z_n}^*|_{\tilde{\clk}_1}
= M_{z_n} M_{z_1} M_{z_n}^*|_{\clk_1} +
M_{z_1} M_{z_n}^*|_{\tilde{\clk}_1},
\]
as $M_{z_1} M_{z_n}^* \clk_1 \subseteq \cls_1$ and $M_{z_1} M_{z_n}^* \tilde{\clk}_1 \subseteq \cls_2$. On the other hand,
$U^*R_{z_1}^{\cls}U|_{\cls_2 \oplus \cls_3} =
M_{z_1}|_{\cls_2 \oplus \cls_3}$ as
$M_{z_1} (\cls_2 \oplus \cls_3) \subseteq
\cls_2 \oplus \cls_3 \subseteq \cll$, and finally, by denoting
$\cln = \clq_{\vp_1}\otimes \clq_{\Phi}{[2,n]}^{\perp}$, we have
\[
U^*R_{z_1}^{\cls}U|_{\cln} = U^* M_{z_1}|_{\cln} =
U^*(I_{\cls} - P^{\cls}_{\cls_1}) M_{z_1}|_{\cln} +
U^* P^{\cls}_{\cls_1} M_{z_1}|_{\cln}.
\]
Then $\cls \ominus \cls_1 = (\cls \ominus \cls_{\Phi}) \oplus \cll$
and $M_{z_1}\cln\subseteq \cls_{\Phi}$ implies that
\[
U^*R_{z_1}^{\cls}U|_{\cln} = P^{\cls_{\Phi}}_{\cll} M_{z_1}|_{\cln}
+ M_{z_n} P^{\cls_{\Phi}}_{\cls_1} M_{z_1}|_{\cln} + F,
\]
and so
\[
\begin{split}
U^*R_{z_1}^{\cls}U|_{\clm_1 \oplus \cll}
& = R_{z_n}^{\cls_{\Phi}} R_{z_1}^{\cls_{\Phi}}
(R_{z_n}^{\cls_{\Phi}})^* P^{\cls_{\Phi}}_{\clk_1} +
R_{z_1}^{\cls_{\Phi}} (R_{z_n}^{\cls_{\Phi}})^* P^{\cls_{\Phi}}_{\tilde{\clk}_1}
+R_{z_1}^{\cls_{\Phi}} P^{\cls_{\Phi}}_{\cls_2 \oplus \cls_3}
\\
& \quad + P^{\cls_{\Phi}}_{\cll} R_{z_1}^{\cls_{\Phi}} P^{\cls_{\Phi}}_{\cln}
+ R_{z_n}^{\cls_{\Phi}} P^{\cls_{\Phi}}_{\cls_1} R_{z_1}^{\cls_{\Phi}}
P^{\cls_{\Phi}}_{\cln} + F.
\end{split}
\]
This implies that $U^*R_{z_1}^{\cls}U \in \clt(\cls_{\Phi})$, and therefore
$U^*\mathcal T (\cls)U \subseteq \mathcal T (\cls_{\Phi})$. We now proceed to prove
the reverse inclusion $U\clt(\cls_{\Phi})U^*\in \clt(\cls)$. Since $\mbox{dim} (\cls\ominus \cls_{\Phi}) < \infty$, it is enough to prove that
$UR_{z_i}^{\cls_{\Phi}}U^*|_{\cls_1 \oplus \cll} \in \clt(\cls)$
for all $i=1,\ldots,n$. Once again, note that
$U^*\cls_1=\clm_1\subseteq \cls_{\Phi}$, $z_n\clm_1\subseteq \clm_1$ and $z_n\cll\subseteq\cll$. Hence
\[
UR^{\cls_{\Phi}}_{z_n}U^*|_{\cls_1\oplus\cll} = UM_{z_n}^2|_{\cls_1}+ U M_{z_n}|_{\cll} = M_{z_n}|_{\cls_1}+ M_{z_n}|_{\cll},
\]
that is
\[
UR^{\cls_{\Phi}}_{z_n}U^*|_{\cls_1\oplus\cll} = R_{z_n}^{\cls}P^{\cls}_{\cls_1\oplus \cll}\in \clt(\cls).
\]
Now, for fixed $1<i<n$, we have $z_i\clm_1\subseteq \cls_3$ and $z_i\cll\subseteq\cll$. Then
\begin{align*}
U R_{z_i}^{\cls_{\Phi}}U^*|_{\cls_1\oplus\cll}
& = U M_{z_i}M_{z_n}|_{\cls_1} + UM_{z_i}|_{\cll}\\
&=M_{z_i}M_{z_n}|_{\cls_1} + M_{z_i}|_{\cll}\\
&= R^{\cls}_{z_i}R_{z_n}^{\cls}P^{\cls}_{\cls_1}+ R_{z_i}^{\cls}P_{\cll}
\in \clt(\cls).
\end{align*}
Finally, we consider the decomposition $\cls_1= \cls_1'\oplus \cls_1''$ where
\[
\cls_1' = (\vp_1\clq_{z^{m-1}})\otimes H_{n-2} \otimes H^2(\D)
\quad \mbox{and} \quad
\cls_1'' = (\vp_1 {z^{m-1}} \mathbb{C})\otimes H_{n-2} \otimes H^2(\D).
\]
Then
\[
\begin{split}
UR^{\cls_{\Phi}}_{z_1}U^*|_{\cls_1}& = U M_{z_1}M_{z_n}|_{\cls_1'}+
U M_{z_1}M_{z_n}|_{\cls_1''}
\\
& =M_{z_n}^*M_{z_1}M_{z_n}|_{\cls_1'} +M_{z_1}M_{z_n}|_{\cls_1''}
\\
& =M_{z_1}|_{\cls_1'}+M_{z_1}M_{z_n}|_{\cls_1''},
\end{split}
\]
as $z_1z_n\cls_1'\subseteq \clm_1$ and $z_1z_n\cls_1''\subseteq \cls_2$.
Moreover
\[
UR^{\cls_{\Phi}}_{z_1}U^*|_{\cls_2\oplus\cls_3}=
UM_{z_1}|_{\cls_2\oplus\cls_3}=M_{z_1}|_{\cls_2\oplus\cls_3},
\]
as $z_1(\cls_2\oplus\cls_3)\subseteq\cls_2\oplus\cls_3$. Denoting $\cln=\clq_{\vp_1}\otimes \clq_{\Phi}{[2,n]}^{\perp}$, as before,
it follows that
\[
UR^{\cls_{\Phi}}_{z_1}U^*|_{\cln}=
UP_{\clm_1}^{\cls_{\Phi}}M_{z_1}|_{\cln}+
U(I_{\cls_{\Phi}}-P_{\clm_1}^{\cls_{\Phi}})M_{z_1}|_{\cln},
\]
this in turn implies that
\[UR^{\cls_{\Phi}}_{z_1}U^*|_{\cln}=M_{z_n}^*P_{\clm_1}^{\cls}M_{z_1}|_{\cln}
+ P_{\cll}^{\cls}M_{z_1}|_{\cln}+ F,
\]
as $\cls_{\Phi}\ominus \clm_1=\clf_1\oplus\cll$ and $\clf_1$
is finite dimensional. Therefore
\[
\begin{split}
UR^{\cls_{\Phi}}_{z_1}U^*|_{\cls_1\oplus \cll} & =
R_{z_1}^{\cls}P^{\cls}_{\cls_1'}+ R^{\cls}_{z_1}R^{\cls}_{z_n}
P^{\cls}_{\cls_1''} + R_{z_1}^{\cls}P^{\cls}_{\cls_2\oplus\cls_3}
\\
&
\quad  +(R^{\cls}_{z_n})^*P_{\clm_1}^{\cls}M_{z_1}P^{\cls}_{\cln}
+ P_{\cll}^{\cls}R^{\cls}_{z_1}P^{\cls}_{\cln}+ F \in\clt(\cls).
\end{split}
\]
This completes the proof of the theorem.
\end{proof}

On combining Theorem~\ref{main1} and Theorem~\ref{main2}, we have the following:

\begin{thm}\label{thm-main cstar}
If $\cls$ is a finite co-dimensional invariant subspace of $H^2(\D^n)$, then $\mathcal T (\cls)$ and $\mathcal T (H^2(\D^n))$ are unitarily equivalent.
\end{thm}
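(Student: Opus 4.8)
The plan is to derive the statement purely by transitivity of unitary equivalence, using a co-doubly commuting subspace of finite codimension as a bridge between $\cls$ and $H^2(\D^n)$. Given a finite codimensional invariant subspace $\cls \subseteq H^2(\D^n)$, the first step is to invoke the characterization recalled earlier in this section (Lemma~3.1 of~\cite{MS}): there exist finite Blaschke products $\vp_1, \ldots, \vp_n$ for which the co-doubly commuting subspace $\cls_{\Phi} = (\clq_{\vp_1} \otimes \cdots \otimes \clq_{\vp_n})^{\perp}$ satisfies $\cls_{\Phi} \subseteq \cls$. Since both $\cls$ and $\cls_{\Phi}$ have finite codimension in $H^2(\D^n)$, one has $\dim(\cls \ominus \cls_{\Phi}) < \infty$, so all hypotheses of the two main theorems of this section are in force.

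The second step is to quote Theorem~\ref{main1} and Theorem~\ref{main2} directly. Theorem~\ref{main1} yields a unitary $U_1 : H^2(\D^n) \raro \cls_{\Phi}$ with $U_1^* \clt(\cls_{\Phi}) U_1 = \clt(H^2(\D^n))$, and Theorem~\ref{main2} yields a unitary $U_2 : \cls_{\Phi} \raro \cls$ with $U_2^* \clt(\cls) U_2 = \clt(\cls_{\Phi})$. The final step is the formal composition: setting $U := U_2 U_1 : H^2(\D^n) \raro \cls$, the operator $U$ is unitary and $U^* \clt(\cls) U = U_1^*(U_2^* \clt(\cls) U_2) U_1 = U_1^* \clt(\cls_{\Phi}) U_1 = \clt(H^2(\D^n))$, which is precisely the assertion that $\clt(\cls)$ and $\clt(H^2(\D^n))$ are unitarily equivalent.

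I expect no genuine obstacle in this combined argument: the entire difficulty has already been front-loaded into Theorems~\ref{main1} and~\ref{main2}, whose proofs construct the two unitaries explicitly and verify the intertwining relations modulo finite-rank perturbations, relying on the facts that $\clt(\cls)$ and $\clt(\cls_{\Phi})$ contain all compact operators (Lemma~\ref{lemma-K}) and that the relevant orthogonal projections lie in the respective $C^*$-algebras (Lemmas~\ref{projection}--\ref{projection 2}). The only point deserving a line of justification in the combined proof is the availability of a finite codimensional co-doubly commuting $\cls_{\Phi}$ inside $\cls$; once that is recorded, the theorem follows from the one-line transitivity computation above.
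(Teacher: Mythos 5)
Your proposal is correct and takes essentially the same route as the paper: the paper's entire proof of Theorem~\ref{thm-main cstar} is the observation that it follows ``on combining Theorem~\ref{main1} and Theorem~\ref{main2},'' with the existence of a finite codimensional co-doubly commuting $\cls_{\Phi} \subseteq \cls$ (finite Blaschke products $\vp_1, \ldots, \vp_n$, via Lemma~3.1 of \cite{MS}) already recorded in Section~5 exactly as you cite it. Your explicit transitivity computation with $U := U_2 U_1$ is just the one-line argument the paper leaves implicit, so there is nothing to add or correct.
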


In the case $n=2$, the proof of the above result is considerably simpler and direct than the one by Seto \cite{MS} (for instance, if  $n=2$, then $1 < i < n$ case does not appear in the proof of Theorem \ref{main2}).

\vspace{0.3in}

\noindent\textbf{Acknowledgement:} The research of the first named
author is supported by DST-INSPIRE Faculty Fellowship No.
DST/INSPIRE/04/2015/001094. The research of the third named author
is supported in part by NBHM (National Board of Higher Mathematics, India) grant NBHM/R.P.64/2014, and the Mathematical Research Impact Centric Support (MATRICS) grant, File No : MTR/2017/000522, by the Science and Engineering Research Board (SERB), Department of Science \& Technology (DST), Government of India.

\end{document}